\newcommand{\floor}[1]{{\lfloor #1 \rfloor}}
\newcommand{\ceil}[1]{{\lceil #1 \rceil}}
\newcommand{\F}{\mathcal{F}}
\newcommand{\G}{\mathcal{G}}
\newcommand{\HH}{\mathcal{H}}
\newcommand{\C}{\mathcal{C}}
\newcommand{\M}{{\cal M}}
\newcommand{\La}{{\rm La}}
\newcommand{\Lai}{{\rm La^{*}}}
\newcommand{\E}{{\rm E}}
\renewcommand{\Pr}{{\rm Pr}}
\renewcommand{\emptyset}{{\varnothing}}
\newcommand{\hb}{{\ell}}
\newcommand{\Lubell}{{\hb}}
\newcommand{\chain}[1]{K_{#1}}
\newcommand{\univ}[1]{U_{#1}}
\newcommand{\univd}[1]{U'_{#1}}
\newcommand{\stdex}[1]{S_{#1}}
\newcommand{\bool}[1]{2^{[#1]}}
\newcommand{\tth}{\pi^*}
\newcommand{\lth}{\lambda^*}
\newcommand{\slth}{\Lambda^*}
\newcommand{\kgmcomment}[1]{}
\newtheorem{lemma}{Lemma}
\newtheorem{theorem}{Theorem}
\newtheorem{cor}{Corollary}
\newtheorem{prop}{Proposition}
\newtheorem{conjecture}{Conjecture}
\newcommand{\st}{\colon\,}
\def\nchn{{\binom{n}{\lfloor n/2 \rfloor}}}
\def\tsupn{\bool{n}}
\title{Set families with forbidden subposets}
\author{Linyuan Lu
\thanks{University of South Carolina, Columbia, SC 29208,
({\tt lu@math.sc.edu}).  Research supported in part by NSF
grant DMS 1300547 and ONR grant N00014-13-1-0717.}
  \and Kevin G. Milans
\thanks{West Virginia University, Morgantown, WV 26505,
({\tt milans@math.wvu.edu}). } }
\begin{document}
\maketitle

\begin{abstract}
Let $\F$ be a family of subsets of $\{1,\ldots,n\}$.  We say that $\F$ is $P$-free if the inclusion order on $\F$ does not contain $P$ as an induced subposet.  The \emph{Tur\'an function} of $P$, denoted $\Lai(n,P)$, is the maximum size of a $P$-free family of subsets of $\{1,\ldots,n\}$.  We show that $\Lai(n,P) \le (4r + O(\sqrt{r}))\nchn$ if $P$ is an $r$-element poset of height at most $2$.  We also show that $\Lai(n,\stdex{r}) = (r+O(\sqrt{r}))\nchn$ where $\stdex{r}$ is the standard example on $2r$ elements, and that $\Lai(n,\bool{2}) \le (2.583+o(1))\nchn$, where $\bool{2}$ is the $2$-dimensional Boolean lattice.
\end{abstract}

\section{Introduction}

Tur\'an-type problems ask for the largest objects that do not contain a particular substructure.  We seek the largest set families whose inclusion order does not contain a copy of a fixed poset.  

A poset $P$ is a \emph{subposet} of $Q$ if $P\subseteq Q$ and $x \le_P y$ if and only if $x \le_Q y$.  We say that $Q$ \emph{contains a copy} of $P$ if $P$ is a subposet of $Q$.  If $P$ and $Q$ have the same elements and $x \le_P y$ implies that $x \le_Q y$, then $Q$ is an \emph{extension} of $P$.  An \emph{antichain} is a set of elements that are pairwise incomparable, and a \emph{chain} is a set of elements that are pairwise comparable.  Let $\chain{r}$ denote the $r$-element chain.  The \emph{height} of a poset $P$ is the maximum size of a chain in $P$, and the \emph{width} of $P$ is the maximum size of an antichain.

Let $\bool{n}$ denote the $n$-dimensional Boolean lattice; this is just the inclusion order on the subsets of $[n]$, where $[n] = \{1,\ldots,n\}$.  The \emph{levels} of the Boolean lattice are the families of the form $\binom{[n]}{k}$, where $\binom{S}{k}$ denotes the subsets of $S$ of size $k$.  If $\F\subseteq \tsupn$, then $\F$ is a subposet and also ordered by containment.  If $\F$ does not contain a copy of $P$, we say that $\F$ is \emph{$P$-free}.  The \emph{Tur\'an function} of $P$, denoted $\Lai(n,P)$, is the maximum size of a $P$-free family $\F$ with $\F \subseteq \tsupn$.  In 1928, Sperner~\cite{Sperner} proved that the maximum size of an antichain in $\bool{n}$ is $\nchn$.  In our language, Sperner's theorem states that $\Lai(n,\chain{2}) = \nchn$.  Erd\H{o}s~\cite{Erdos} extended Sperner's result to longer chains, showing that $\Lai(n,\chain{r})$ is the sum of the $r-1$ largest binomial coefficients in $\{\binom{n}{0}, \binom{n}{1}, \ldots, \binom{n}{n}\}$.  Hence $\Lai(n,\chain{r}) = (r-1 + o(1))\nchn$ for fixed $r$ and growing $n$.  Except when $P$ is an antichain, $\Lai(n, P)$ is asymptotic to a positive integer multiple of $\nchn$ in all known cases.  It is convenient to define the \emph{Tur\'an threshold}, denoted $\tth(P)$, to be $\limsup_{n\to\infty} \Lai(n,P)/\nchn$ whenever the latter is finite.  Clearly, $\Lai(n, P) \le (\tth(P) + o(1))\nchn$ for each poset $P$.  Erd\H{o}s's result implies that $\tth(\chain{r}) = r-1$.

A well-studied variant of this problem uses a weaker notion of containment.  The \emph{weak Tur\'an function} of $P$, denoted $\La(n,P)$, is the maximum size of a family of subsets $\F$ of $[n]$ such that $\F$ does not contain an extension of $P$.  Under this weaker notion of containment, one must avoid $P$ as well all proper extensions of $P$, and so always $\La(n,P) \le \Lai(n,P)$.  For chains, the two notions of containment coincide and equality holds.  Many groups obtained bounds on the weak Tur\'an function for specialized poset families; see~\cite{ext1, ext2, ext3, ext4, ext5, ext6} for results spanning nearly 25 years.  In 2009, Bukh~\cite{Bukh} unified many previous results by showing that $\La(n,P) \le (k-1)\nchn(1+O(1/n))$ when $P$ is tree poset of height $k$.  Perhaps the most celebrated special case is the forbidden diamond problem, where $P=\bool{2}$.  Axenovich, Manske, and Martin~\cite{AMM} proved that $\La(n,\bool{2})\le (2.284+o(1))\nchn$.  Griggs, Li, and Lu~\cite{GLL} reduced the constant to $2+3/11$, which is approximately $2.273$.  The current record of $2.25$ is due to Kramer, Martin, and Young~\cite{KMY}, and it appears that new tools will be required to improve the bound further.  The two largest levels show that $\La(n,\bool{2})\ge (2-o(1))\nchn$, and it is conjectured that $\La(n,\bool{2})$ is asymptotic to $2\nchn$.  

Aside from the classic results of Sperner and Erd\H{o}s, not much is known about Tur\'an thresholds in the induced case.  In 2008, Carroll and Katona~\cite{CK} proved that 
\[ (1 + 1/n + \Omega(1/n^2))\nchn \le \Lai(n, V_2) \le (1 + 2/n + O(1/n^2)\nchn, \]
where $V_2$ is the poset on elements $\{a, b_1, b_2\}$ with $a\le b_1$ and $a\le b_2$, and no other relations.  It follows that $\tth(V_2) = 1$.  A \emph{tree poset} is a poset whose Hasse diagram is a tree.  In 2012, Boehnlein and Jiang~\cite{BJ} gave an induced analogue of Bukh's result.  They showed that $\Lai(n, P) \le (k-1 + O(\sqrt{\ln n/n})\nchn$ when $P$ is a tree poset of height $k$.  Since the largest $k-1$ levels of $\bool{n}$ are $P$-free when $P$ has height $k$, it follows that $\tth(P) = k-1$. 

In this article, we show that series-parallel posets and posets of height 2 have finite Tur\'an thresholds.  We believe much more is true.
\begin{conjecture}\label{conj:tt}
Every poset has a finite Tur\'an threshold.
\end{conjecture}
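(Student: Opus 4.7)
The plan is to reduce Conjecture~\ref{conj:tt} to bounding $\tth(\bool{r})$ for every $r$, and then to attempt this bound by induction on $r$ via a supersaturation argument.

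The reduction is clean. Every $r$-element poset $P$ embeds as an induced subposet of $\bool{r}$ via the principal order ideal map $\phi \colon x \mapsto \{y \in P : y \le_P x\}$: the map is injective because $x \in \phi(x)$, and $\phi(x) \subseteq \phi(x')$ if and only if $x \le_P x'$. Consequently any family $\F \subseteq \bool{n}$ containing an induced copy of $\bool{r}$ also contains an induced copy of $P$, so $\Lai(n,P) \le \Lai(n,\bool{r})$. This paper already settles $r \le 2$ (Sperner for $r=1$, and $\Lai(n,\bool{2}) \le (2.583+o(1))\nchn$ for $r=2$), so the conjecture reduces to showing $\tth(\bool{r}) < \infty$ for all $r \ge 3$.

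For the remaining bound I would induct on $r$, exploiting the product structure $\bool{r} \cong \bool{r-1} \times \chain{2}$: every induced $\bool{r}$ in $\F$ is a pair of induced copies of $\bool{r-1}$ that are \emph{aligned}, meaning the two copies can be matched into $2^{r-1}$ strictly comparable pairs with no extra comparabilities between unmatched elements. The key lemma to aim for is a supersaturation statement roughly of the form: if $\F$ is $\bool{r}$-free and $|\F| > (r + \varepsilon)\nchn$, then the induced copies of $\bool{r-1}$ in $\F$ must obey a strong structural constraint (for instance, few of them can sit above any given anchor set without forcing an alignment). Iterating this constraint together with the inductive bound $\Lai(n,\bool{r-1}) \le C_{r-1}\nchn$ would then force $|\F| = O_r(\nchn)$. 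The natural averaging tool is the Lubell function computed along a random maximal chain in $\bool{n}$, weighting each anchor $A$ by $1/\binom{n}{|A|}$.

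The main obstacle is producing the alignment that forces two induced copies of $\bool{r-1}$ to combine into an induced $\bool{r}$. Because the containment is induced rather than weak, a single extra comparability between two otherwise compatible copies destroys the $\bool{r}$, and a priori a dense family can contain many copies of $\bool{r-1}$ all spoiled by such cross-comparabilities. This is exactly where the non-induced machinery (Bukh's tree-poset argument and the averaging proofs for $\La(n,\bool{2})$) breaks down, because it cannot distinguish a cube from its proper extensions. A stability result identifying which comparabilities must appear in an extremal $\bool{r}$-free family, or a flag-algebra / entropy framework tailored to induced containment, seems necessary. A sensible first test case is $r=3$: a finite bound on $\tth(\bool{3})$, ideally near the trivial lower bound of $3$, would indicate whether the inductive plan above can be carried out in general.
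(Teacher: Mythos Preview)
The statement you are attempting to prove is labeled a \emph{conjecture} in the paper, and the paper does not prove it. Your reduction to Boolean lattices is correct and is exactly the observation the paper makes at the start of Section~\ref{sec:dia}: since every $r$-element poset embeds as an induced subposet of $\bool{r}$ via $x\mapsto D[x]$, Conjecture~\ref{conj:tt} is equivalent to $\tth(\bool{r})<\infty$ for all $r$. So on that point you and the paper agree.

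However, your proposal does not supply a proof beyond this reduction. The inductive step you describe is a research outline, not an argument: you state a ``key lemma to aim for'' in vague terms, admit that the alignment problem for induced copies of $\bool{r-1}$ is ``the main obstacle,'' and conclude that some unspecified stability or flag-algebra machinery ``seems necessary.'' None of this is carried out. In particular, the supersaturation statement you gesture at (that many induced copies of $\bool{r-1}$ force an aligned pair) is precisely the difficulty; the paper does not know how to do this either, and says explicitly that it is open whether $\bool{4}$ has a finite Tur\'an or Lubell threshold.

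One factual correction: you say the paper settles only $r\le 2$ and propose $r=3$ as a first test case. In fact Corollary~\ref{cor:b3} already gives $\slth(\bool{3})\le 16$, hence $\tth(\bool{3})\le 16$. The first genuinely open case is $r=4$.
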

Conjecture~\ref{conj:tt} is equivalent to the statement that for each poset $P$, the Tur\'an function satisfies $\Lai(n,P) = O(\nchn) = O(2^n/\sqrt{n})$.  It is known (see \cite{JLM}) that for each poset $P$, there exists $\alpha > 0$ such that $\Lai(n,P) = O(2^n/n^\alpha)$.  

In general, proving upper bounds on $\Lai(n,P)$ appears to be more challenging than proving upper bounds on $\La(n,P)$.  For example, the extension-containment analogue of Conjecture~\ref{conj:tt} follows directly from Erd\H{o}s's result, since $\La(n,P) \le \La(n,\chain{r}) \le (r-1+o(1))\nchn$ when $P$ is an $r$-element poset; see~\cite{BN} and~\cite{CL} for improvements for general $P$.

To establish bounds on the Tur\'an function and the Tur\'an threshold, it is convenient to use a weighting on $\tsupn$ in which the set $A\in\tsupn$ has weight $1/\binom{n}{|A|}$.  This weighting scheme has a nice probabilistic interpretation.  A maximal chain in $\tsupn$ contains one set of each size in $\{0,\ldots,n\}$.  If a maximal chain is chosen uniformly at random, then all sets of a particular size are equally likely to appear in the chain.  The weight of $A\in \tsupn$ is equal to the probability that a randomly chosen maximal chain in $\tsupn$ contains $A$.  

When $\F \subseteq \tsupn$, we define the \emph{Lubell function} of $\F$, denoted $\hb(\F)$, to be the expected number of times that a randomly chosen maximal chain in $\bool{n}$ meets $\F$.  Since $\hb(\F) = \sum_{A\in\F} \frac{1}{\binom{n}{|A|}}$, the Lubell function may be viewed as an alternative measure of the size of $\F$ where sets in the middle of the Boolean lattice have small weight and sets and the ends have large weight.  The value of the Lubell function at $\F$ is the \emph{Lubell mass} of $\F$.  We define the \emph{Lubell threshold}, denoted $\lth(P)$, to be $\limsup_{n\to \infty} \{\hb(\F)\st \mbox{$\F\subseteq \tsupn$ and $\F$ is $P$-free}\}$.  Since $\hb(\F) \ge |\F|/\nchn$, it follows that $\lth(P) \ge \tth(P)$.  For most of our results on the Lubell mass of $P$-free families, we do not need $n$ to be large.  In these cases, we use the \emph{strong Lubell threshold}, denoted $\slth(P)$, which is simply $\sup \{\hb(\F) \st \mbox{$\F$ is $P$-free}\}$.  Clearly, $\slth(P) \ge \lth(P)$ for all $P$, and if $\lth(P)$ is finite, then so is $\slth(P)$.  In our language, Lubell gave an elegant proof of Erd\H{o}s's result by showing that $\slth(\chain{r}) = r-1$.  To establish our results, we prove that certain posets have finite Lubell thresholds.  We propose a strengthening of Conjecture~\ref{conj:tt}.

\begin{conjecture}\label{conj:lt}
Every poset has a finite Lubell threshold.
\end{conjecture}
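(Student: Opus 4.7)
The plan is to reduce the conjecture from arbitrary posets to Boolean lattices and then to induct on dimension. For the reduction, recall that any $r$-element poset $P$ embeds as an induced subposet of $\bool{r}$ via the principal-ideal map $x \mapsto \{y \in P \st y \le_P x\}$, which satisfies $x \le_P y$ if and only if the image of $x$ is contained in the image of $y$. Consequently, every $P$-free family in $\tsupn$ is also $\bool{r}$-free, so $\slth(P) \le \slth(\bool{r})$, and it suffices to prove $\slth(\bool{k}) < \infty$ for every $k \ge 1$. The case $k = 1$ is Lubell's form of Sperner's theorem, giving $\slth(\bool{1}) = \slth(\chain{2}) = 1$, and a Lubell-type bound for $\bool{2}$ is established later in this paper.

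For the inductive step, assume $\slth(\bool{k}) \le c_k < \infty$ and let $\F \subseteq \tsupn$ be $\bool{k+1}$-free. For each $A \in \F$, set $\F^+_A = \{B \in \F \st A \subsetneq B\}$. The optimistic hope is that $\F^+_A$ is always $\bool{k}$-free; if so, then a double-counting argument over pairs $(A, C)$, where $C$ is a uniformly random maximal chain in $\tsupn$ containing $A$, would bound $\hb(\F)$ in terms of $c_k$ alone and close the induction to give $c_{k+1} < \infty$.

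The main obstacle is that $\F^+_A$ need not be $\bool{k}$-free: an induced $\bool{k}$ inside $\F^+_A$ combines with $A$ to yield an induced $\bool{k+1}$ in $\F$ only when $A$ is strictly below every element of the $\bool{k}$ and carries no unwanted incomparabilities to other elements of $\F$. This delicacy is exactly what separates $\Lai$ from $\La$ throughout the subject, and it defeats the most direct induction. A plausible workaround is to forbid a slightly richer list of configurations than $\bool{k+1}$ alone -- for instance, all induced ``suspensions'' of $\bool{k}$ (a $\bool{k}$ with a new minimum adjoined, comparably to every element of the $\bool{k}$) -- and to invoke the height-$2$ bound established in this paper to absorb the remaining unwanted comparabilities at each level of the recursion.

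Should the dimension-induction prove intractable, an alternative attack is to induct on the height $h$ of $P$ directly: height $1$ is an antichain and height $2$ is handled in this paper. For $h \ge 3$, I would split the top antichain $M$ of $P$ off from its height-$(h-1)$ complement $P' = P \setminus M$ and try to decompose a $P$-free family $\F$ into a bounded number of pieces that are effectively $P'$-free on carefully chosen slabs of $\tsupn$, with a Ramsey-style cleaning step to handle induced copies of $P'$ newly created by restriction. Either approach will likely yield only very crude (tower-type) bounds on $\slth(P)$, but since Conjecture~\ref{conj:lt} asks only for finiteness, that would suffice.
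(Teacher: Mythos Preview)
The statement is Conjecture~\ref{conj:lt}, which the paper poses as \emph{open}; there is no proof in the paper to compare against. The paper establishes the conjecture only for series-parallel posets and for posets of height at most~$2$, and explicitly remarks in Section~\ref{sec:dia} that it is unknown whether even $\bool{4}$ has a finite Lubell (or Tur\'an) threshold. The reduction to Boolean lattices that you describe is the content of the first paragraph of Section~\ref{sec:dia}; it does not by itself resolve anything.

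Your proposal is a plan rather than a proof, and you correctly flag the central obstruction yourself: if $\F$ is $\bool{k+1}$-free and $A\in\F$, the strict upset $\F^+_A$ need not be $\bool{k}$-free, because an induced $\bool{k}$ lying entirely above $A$ combines with $A$ to form only the \emph{suspension} $\widehat{\bool{k}}$ (a $(2^k+1)$-element poset of height $k+2$), not $\bool{k+1}$. Your suggested fix, however, points in the wrong direction. Forbidding the suspension is a \emph{stronger} hypothesis than forbidding $\bool{k+1}$, since $\widehat{\bool{k}}$ is an induced subposet of $\bool{k+1}$; any Lubell bound you prove for $\widehat{\bool{k}}$-free families therefore says nothing about the larger class of $\bool{k+1}$-free families. (It is true, and easy from Corollary~\ref{cor:shallow}, that $\slth(\widehat{\bool{k}})\le\slth(\bool{k})+1$, but this inequality is useless for bounding $\slth(\bool{k+1})$.) The appeal to ``the height-$2$ bound to absorb unwanted comparabilities'' is also misplaced: Theorem~\ref{thm:bip} concerns forbidden posets of height~$2$, whereas $\widehat{\bool{k}}$ has height $k+2$. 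The alternative height-induction you sketch runs into the same obstacle in disguise: an induced copy of $P\setminus M$ above a given $A$ does not in general extend to an induced copy of $P$, and the ``Ramsey-style cleaning step'' is precisely the missing idea. In short, neither route supplies what is needed, and the conjecture remains open.
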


In Section~\ref{sec:sp}, we prove that series-parallel posets have finite Lubell thresholds.  In Section~\ref{sec:bip}, we prove that posets of height at most $2$ have finite Lubell thresholds.  We also show that $r-2 \le \tth(\stdex{r}) \le \slth(\stdex{r}) = r + O(\sqrt{r})$, where $\stdex{r}$ denotes the standard example on $2r$ elements.  In Section~\ref{sec:dia}, we establish a preliminary upper bound for the induced forbidden diamond, showing that $\tth(\bool{2}) < 2.583$.

\section{Series-Parallel Posets}\label{sec:sp}

Given posets $P$ and $Q$, the \emph{series construction with $P$ below $Q$} forms a new poset by taking the disjoint union of $P$ and $Q$ and setting each element of $P$ to be less than each element of $Q$.  Similarly, the \emph{parallel construction} forms a new poset by taking the disjoint union of $P$ and $Q$ in which each element of $P$ is incomparable with each element of $Q$.  The family of \emph{series-parallel posets} are defined inductively as follows.  The single element poset is a series-parallel poset.  If $P$ and $Q$ are series-parallel posets, then the posets formed via the series construction and the parallel construction are also series-parallel posets. 

To establish these results, we need a refined notion of the Lubell function.  In a poset $P$ with $x\le y$, the \emph{interval $[x,y]$} is $\{z\in P\st x\le z \le y\}$.  When $\F\subseteq \tsupn$ and $I$ is an interval in $\tsupn$, we define $\hb(\F; I)$ to be the expected number of times that a random, maximal chain in $I$ meets $\F$.  It is convenient to define $\hb_A^-(\F)$ to be $\hb(\F; [\emptyset, A])$ and $\hb_A^+(\F)$ to be $\hb(\F; [A, [n]])$.

\begin{prop}\label{prop:UD}
Let $\F$ be a nonempty family of subsets of $[n]$ and let $p$ be the probability that a random maximal chain from $\emptyset$ to $[n]$ meets at least one set in $\F$.  There exists $A, A' \in F$ such that $\hb_A^+(\F) \ge \hb(\F)/p$ and $\hb_{A'}^-(\F) \ge \hb(\F)/p$.
\end{prop}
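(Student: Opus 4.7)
The plan is to condition on which element of $\F$ appears lowest (respectively, highest) on a random maximal chain $C$ from $\emptyset$ to $[n]$, and to exploit the fact that, conditional on $A \in C$, the portions of $C$ lying in $[\emptyset, A]$ and in $[A, [n]]$ are independent and each uniformly distributed among maximal chains of their respective intervals.

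Let $M = |\F \cap C|$, so that $\E[M] = \hb(\F)$. For each $A \in \F$, let $T_A$ be the event that $A$ is the smallest element of $\F \cap C$, and set $q_A = \Pr[T_A]$. Because $C$ is a chain, the events $\{T_A\}_{A \in \F}$ partition the event that $C$ meets $\F$, so $\sum_{A \in \F} q_A = p$.

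The key observation is that $T_A$ imposes constraints only on the portion of $C$ lying in $[\emptyset, A]$, namely that no element of $\F \cap [\emptyset, A)$ is met. Hence, conditional on $T_A$, the portion of $C$ in $[A, [n]]$ remains uniformly distributed among maximal chains of $[A, [n]]$. Since $A \in \F$ and $A$ is the smallest element of $\F \cap C$, we have $M = |\F \cap C \cap [A, [n]]|$ on $T_A$, and so $\E[M \mid T_A] = \hb_A^+(\F)$. The law of total expectation then yields
\[ \hb(\F) = \sum_{A \in \F} q_A \cdot \hb_A^+(\F), \]
and combined with $\sum_{A} q_A = p$ this exhibits $\hb(\F)/p$ as a weighted average of the values $\hb_A^+(\F)$. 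Consequently some $A \in \F$ satisfies $\hb_A^+(\F) \ge \hb(\F)/p$.

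The claim for $A'$ with $\hb_{A'}^-(\F) \ge \hb(\F)/p$ follows by the symmetric argument, now conditioning on which element of $\F \cap C$ is largest. I do not expect a serious obstacle; the only point that requires care is verifying that conditioning on $T_A$ does not bias the upper portion of the chain, which is immediate from the independence of the two halves of a random maximal chain passing through $A$.
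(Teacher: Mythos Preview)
Your proof is correct and follows essentially the same approach as the paper: both condition on the minimum element of $\F\cap C$, use the law of total expectation with $\E[M\mid T_A]=\hb_A^+(\F)$, and conclude via a maximum/weighted-average argument, with the second claim obtained by symmetry. You spell out the independence of the upper and lower halves of $C$ a bit more explicitly than the paper does, but the argument is the same.
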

\begin{proof}
Let $X$ be the number of times that a random maximal chain $C$ from $\emptyset$ to $[n]$ meets $\F$, and for each $A\in \F$, let $E_A$ be the event that $\min \{C\cap \F\} = A$, and let $E_0$ be the event that $C \cap \F = \emptyset$.  Note that $E_0$ and the events in $\{E_A\st A\in \F\}$ partition the space of all maximal chains.  Therefore 
\begin{align*}
\hb(\F) = \E[X] &= \E[X|E_0]\cdot\Pr[E_0] + \sum_{A\in \F} \E[X|E_A]\cdot\Pr[E_A] \\
	& = 0\cdot(1-p) + \sum_{A \in \F} \hb_A^+(\F)\cdot\Pr[E_A] \\
	&\le \max_{A\in \F} \{\hb_A^+(\F)\} \sum_{A\in \F} \Pr[E_A] \\
	&= \max_{A\in \F} \{\hb_A^+(\F)\} p.
\end{align*}
The second statement follows by symmetry.
\end{proof}

A family $\F \subseteq \tsupn$ is \emph{upwardly $\alpha$-shallow} if $\hb_A^+(\F) \le \alpha$ for each $A\in\F$.  Similarly, $\F$ is \emph{downwardly $\alpha$-shallow} if $\hb_A^-(\F) \le \alpha$ for each $A\in \F$.  We say that $\F$ is \emph{$\alpha$-shallow} if it is upwardly $\alpha$-shallow or downwardly $\alpha$-shallow.

\begin{cor}\label{cor:shallow}
If $\F$ is $\alpha$-shallow, then $\hb(\F) \le \alpha$.
\end{cor}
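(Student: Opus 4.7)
The plan is to derive this as an essentially immediate consequence of Proposition~\ref{prop:UD}. First, I would handle the trivial case where $\F$ is empty, for which $\hb(\F) = 0 \le \alpha$.

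Assume $\F$ is nonempty and let $p$ be the probability that a random maximal chain from $\emptyset$ to $[n]$ meets $\F$, so $p \in (0,1]$. Consider the upwardly $\alpha$-shallow case. By Proposition~\ref{prop:UD}, there exists some $A \in \F$ with $\hb_A^+(\F) \ge \hb(\F)/p$. Since $\F$ is upwardly $\alpha$-shallow, that same $A$ satisfies $\hb_A^+(\F) \le \alpha$, so $\hb(\F) \le p \cdot \hb_A^+(\F) \le p \cdot \alpha \le \alpha$. The downwardly shallow case is identical, using the element $A'$ guaranteed by the proposition and the hypothesis $\hb_{A'}^-(\F) \le \alpha$.

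Since there is no real obstacle here beyond unpacking the proposition, there is no hard step to highlight; the only thing to be careful about is the empty-family edge case and noting that $p \le 1$ so the factor of $p$ can be absorbed. This is why the statement is presented as a corollary rather than a separate theorem.
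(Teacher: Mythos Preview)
Your proof is correct and takes essentially the same approach as the paper: invoke Proposition~\ref{prop:UD} to obtain a set $A$ with $\hb_A^+(\F)$ (or $\hb_{A'}^-(\F)$) at least $\hb(\F)$, then apply the $\alpha$-shallow hypothesis. The paper simply absorbs the factor $p\le 1$ immediately rather than carrying it, and it does not separately address the empty-family case, but these are cosmetic differences.
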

\begin{proof}
By symmetry, we may assume that $\F$ is upwardly $\alpha$-shallow.  By Proposition~\ref{prop:UD}, there exists $A\in\F$ such that $\hb_A^+(\F) \ge \hb(\F)$.  Since $\F$ is upwardly $\alpha$-shallow, it must be that $\hb_A^+(\F) \le \alpha$.  
\end{proof}

If $x$ is an element in $P$, we define the \emph{open downset} of $x$, denoted $D_P(x)$, to be $\{y\in P\st y<x\}$ and the \emph{closed downset} of $x$, denoted $D_P[x]$, to be $D(x)\cup \{x\}$.  Similarly, we denote the \emph{open upset} of $x$ by $U_P(x)$ and \emph{closed upset} of $x$ by $U_P[x]$.  In each of these, we may omit the subscript when $P$ is clear from context. 

\begin{lemma}\label{lem:series}
Let $P_1$ and $P_2$ be posets and let $P$ be the poset obtained by introducing a new element $u$, setting all elements of $P_1$ to be below $u$, and all elements of $P_2$ to be above $u$.  If $\slth(P_1)$ and $\slth(P_2)$ are finite, then so is $\slth(P)$ and  $\slth(P) \le \slth(P_1) + \slth(P_2) + 2$.
\end{lemma}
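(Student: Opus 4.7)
The plan is to partition any $P$-free family $\F\subseteq \tsupn$ into two subfamilies, one downwardly $(\slth(P_1)+1)$-shallow and one upwardly $(\slth(P_2)+1)$-shallow, and then combine the bounds from Corollary~\ref{cor:shallow}.

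For each $A\in \F$, let $\F_A^- = \{B\in \F\st B\subsetneq A\}$ and $\F_A^+ = \{B\in \F\st B\supsetneq A\}$. The crucial observation is that if $\F_A^-$ contains a copy of $P_1$ and $\F_A^+$ contains a copy of $P_2$, then adjoining $A$ in the role of $u$ produces a copy of $P$ in $\F$. Indeed, any $B\in \F_A^-$ is a proper subset of $A$ and any $C\in \F_A^+$ is a proper superset, so $B\subsetneq A\subsetneq C$ gives precisely the relations $B<A<C$ (and hence $B<C$ by transitivity) that match $P_1<u<P_2$ (and the induced $P_1<P_2$) in $P$, with no extra comparabilities among the chosen elements. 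Since $\F$ is $P$-free, for each $A\in \F$ at least one of the two containments must fail. Define $\F' = \{A\in \F\st \F_A^-\text{ is }P_1\text{-free}\}$ and $\F'' = \F\setminus \F'$, so for every $A\in \F''$ the family $\F_A^+$ is $P_2$-free.

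Next I would bound the Lubell mass of each part. For $A\in \F'$, the interval $[\emptyset,A]$ is isomorphic to $\bool{|A|}$, and $\F_A^-$ is a $P_1$-free subfamily inside it, so its Lubell mass in this interval is at most $\slth(P_1)$. Including $A$ itself contributes an additional $1$, giving $\hb_A^-(\F')\le \hb_A^-(\F)\le \slth(P_1)+1$. Thus $\F'$ is downwardly $(\slth(P_1)+1)$-shallow, and Corollary~\ref{cor:shallow} implies $\hb(\F')\le \slth(P_1)+1$. The symmetric argument with upsets and the defining property of $\F''$ yields $\hb(\F'')\le \slth(P_2)+1$. Since $\F'$ and $\F''$ partition $\F$, we conclude $\hb(\F)=\hb(\F')+\hb(\F'')\le \slth(P_1)+\slth(P_2)+2$, and taking the supremum over $P$-free families gives the claimed bound on $\slth(P)$.

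The step requiring genuine care is verifying that the combined triple (a copy of $P_1$ in $\F_A^-$, the set $A$, a copy of $P_2$ in $\F_A^+$) yields an \emph{induced} copy of $P$ rather than merely containing $P$ as a weak subposet. Fortunately transitivity of strict inclusion produces exactly the comparabilities $B\subsetneq C$ required by the relation $P_1 < P_2$ forced in $P$, and nothing more. Once this is checked, the rest is a routine two-sided application of the shallowness framework already in place.
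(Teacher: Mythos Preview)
Your proof is correct and follows essentially the same strategy as the paper: both arguments hinge on Corollary~\ref{cor:shallow} applied to a downwardly $(\slth(P_1)+1)$-shallow subfamily and an upwardly $(\slth(P_2)+1)$-shallow subfamily. The only cosmetic difference is that you partition $\F$ directly according to which of $P_1$, $P_2$ fails to embed below/above $A$, whereas the paper argues by contrapositive, defining its subfamilies by a Lubell-mass threshold and then exhibiting a leftover element $A$ that plays the role of $u$; your packaging is arguably a bit more direct, but the underlying idea is identical.
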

\begin{proof}
Let $\alpha_1= \slth(P_1)$, $\alpha_2 = \slth(P_2)$, and let $\F \subseteq \tsupn$ satisfy $\hb(\F) > \alpha_1 + \alpha_2 + 2$.  We show that $\F$ contains a copy of $P$.  Let $\F_1 = \{A\in\F\st \hb_A^-(\F) \le \alpha_1 + 1\}$, and note that $\F_1$ is a downwardly $(\alpha_1 + 1)$-shallow set.  By Corollary~\ref{cor:shallow}, we have that $\hb(\F_1)\le \alpha_1 + 1$.  Similarly, let $\F_2 = \{A\in\F\st \hb_A^+(\F) \le \alpha_2 + 1\}$, and note that Corollary~\ref{cor:shallow} implies that $\hb(\F_2) \le \alpha_2 + 1$. 

Let $\F' = \F - (\F_1 \cup \F_2)$.  We have that $\hb(\F') \ge \hb(\F) - \hb(\F_1) - \hb(\F_2) > (\alpha_1 + \alpha_2 + 2) - (\alpha_1 + 1) - (\alpha_2 + 1) = 0$.  It follows that $\F'$ is nonempty.  Let $A\in \F'$; we obtain a copy of $P$ in which $A$ is identified with $u$.

Since $A\not\in \F_1$, we have that $\hb_A^-(\F) > \alpha_1 + 1$ and hence $\hb_A^-(\F - \{A\}) = \hb_A^-(\F) - 1 > \alpha_1$.  Since $\slth(P_1) = \alpha_1$, it follows that $D_\F(A)$ contains a copy of $P_1$.  Similarly, $U_\F(A)$ contains a copy of $P_2$.
\end{proof}

Note that Lemma~\ref{lem:series} is sharp when $P_1$ and $P_2$ are chains.  We say that $\F\subseteq \tsupn$ is \emph{balanced} if $\hb(\F;I) \le \hb(\F)$ for each interval $I$ in $\tsupn$.  When we wish to find induced subposets of $\F$ under the assumption $\hb(\F) >\alpha$, we may assume that $\F$ is balanced.  Indeed, for a general family $\F \subseteq \tsupn$, choose an interval $[A,B]$ to maximize $\hb(\F; [A,B])$, and let $\F' = \{C-A\st C\in\F \cap [A,B]\}$.  Note that $\F'\subseteq 2^{B-A}$, and relative to this subcube, $\F'$ has Lubell mass $\hb(\F, [A,B])$, which is at least $\hb(\F)$.  By the selection of $[A,B]$, the family $\F'$ is balanced relative to $2^{B-A}$.

For disjoint sets $A,B\subseteq [n]$, we define $R_A^B$ to be the collection of all sets in $\tsupn$ that contain $A$ but are disjoint from $B$.  Note that $R_A^B$ is just the interval $[A, [n] - B]$ and is hence a subcube of dimension $n-|A|-|B|$.  When $A$ or $B$ is small, we may omit the set notation and write $R_{ij}^k$ for $R_{\{i,j\}}^{\{k\}}$ or simply $R_i$ for $R_{\{i\}}^\emptyset$.

\begin{lemma}\label{lem:para}
Let $P_1$ and $P_2$ be posets and let $P$ be the disjoint union of $P_1$ and $P_2$ with each element of $P_1$ incomparable with each element of $P_2$.  If $\slth(P_1)$ and $\slth(P_2)$ are well defined, then $\slth(P) \le \max\{\slth(P_1), \slth(P_2)\} + 8$.
\end{lemma}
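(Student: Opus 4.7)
The plan is to exploit the fact that whenever $i$ and $j$ are distinct elements of $[n]$, every set in $R_i^j$ is incomparable with every set in $R_j^i$: if $A\in R_i^j$ and $B\in R_j^i$ then $i\in A\setminus B$ and $j\in B\setminus A$. Consequently, a copy of $P_1$ inside $\F\cap R_i^j$ together with a copy of $P_2$ inside $\F\cap R_j^i$ assembles into a copy of $P$ inside $\F$, since $P$ is the parallel composition of $P_1$ and $P_2$.

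Set $\alpha = \max\{\slth(P_1),\slth(P_2)\}$ and let $\F\subseteq\tsupn$ satisfy $\hb(\F) > \alpha + 8$. Using the balanced reduction described just before the lemma, I may assume that $\F$ is balanced, so $\hb(\F;I)\le \hb(\F)$ for every interval $I$ in $\tsupn$. Since $R_i^j$ is the interval $[\{i\},[n]-\{j\}]$ and the Lubell mass of $\F\cap R_i^j$ inside this $(n-2)$-dimensional subcube is precisely $\hb(\F;R_i^j)$, it suffices to exhibit an unordered pair $\{i,j\}$ with both $\hb(\F;R_i^j) > \slth(P_1)$ and $\hb(\F;R_j^i) > \slth(P_2)$; then the required copies of $P_1$ and $P_2$ exist by definition of $\slth$.

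To find such a pair, I would average over ordered pairs. A set $A\in\F$ of size $k$ belongs to $R_i^j$ for exactly $k(n-k)$ ordered pairs $(i,j)$, so a short calculation gives
\[
\sum_{i\ne j} \hb(\F;R_i^j) \;=\; n(n-1)\bigl(\hb(\F) - w_\emptyset - w_{[n]}\bigr) \;\ge\; n(n-1)\bigl(\hb(\F) - 2\bigr),
\]
where $w_\emptyset,w_{[n]}\in\{0,1\}$ indicate whether $\emptyset$ and $[n]$ lie in $\F$; these extreme sets lie in no $R_i^j$, which is the only reason the identity is not exact. Suppose for contradiction that no good pair exists. Then for every unordered $\{i,j\}$ the smaller of $\hb(\F;R_i^j)$ and $\hb(\F;R_j^i)$ is at most $\alpha$, while the larger is at most $\hb(\F)$ by balance, so $\hb(\F;R_i^j) + \hb(\F;R_j^i) \le \alpha + \hb(\F)$. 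Summing over the $\binom{n}{2}$ unordered pairs and comparing to the display yields $2(\hb(\F)-2) \le \alpha + \hb(\F)$, i.e., $\hb(\F)\le \alpha + 4$, contradicting $\hb(\F) > \alpha + 8$.

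The main delicate point is accounting for the two extreme sets $\emptyset$ and $[n]$, which contribute to $\hb(\F)$ but to no $\hb(\F;R_i^j)$; this is the only source of additive slack in the bound and fits comfortably within the stated constant $8$. The remainder is a routine averaging argument parallel to the one used in Lemma~\ref{lem:series}, with the interval family $\{R_i^j\}$ playing the role that the downsets and upsets of single elements played there.
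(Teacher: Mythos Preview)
Your argument is correct, and in fact it yields the stronger conclusion $\slth(P)\le \alpha+4$: your contradiction only required $\hb(\F)>\alpha+4$, not $\alpha+8$.

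Your route differs from the paper's in a useful way. The paper averages the four-term sum $\theta_{ij}=\hb(\F;R_{ij})+\hb(\F;R_i^j)+\hb(\F;R_j^i)+\hb(\F;R^{ij})$ over unordered pairs, establishes $\sum_{\{i,j\}}\theta_{ij}\ge\binom{n}{2}(4\hb(\F)-8)$, picks a pair with $\theta_{ij}\ge 4\hb(\F)-8$, and then uses balance on \emph{all four} terms to force each one to be at least $\hb(\F)-8>\alpha$. The additive loss of $8$ in their average comes from sets of sizes $0,1,n-1,n$, which fail to lie in one or more of the four subcubes. You instead sum only $\hb(\F;R_i^j)$ over ordered pairs; the identity $k(n-k)/\binom{n-2}{k-1}=n(n-1)/\binom{n}{k}$ holds for all $1\le k\le n-1$, so only $\emptyset$ and $[n]$ are lost, costing $2$ rather than $8$. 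Your min/max estimate ($\min\le\alpha$, $\max\le\hb(\F)$) then loses only one factor of $\hb(\F)$ rather than three. The paper's four-interval partition is conceptually tidy (the intervals partition $\tsupn$), but your two-interval version is both shorter and quantitatively sharper.
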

\begin{proof}
Let $\alpha = \max\{\slth(P_1), \slth(P_2)\}$.  Let $\F$ be a family of subsets of $[n]$ with $\hb(\F)>\alpha + 8$; we may assume without loss of generality that $\F$ is balanced.  We show that for some pair $\{i,j\}\in\binom{[n]}{2}$, both $\hb(\F; R_i^j)$ and $\hb(\F; R_j^i)$ are larger than $\alpha$.  Since $z_1$ and $z_2$ are incomparable when $z_1 \in R_i^j$ and $z_2 \in R_j^i$, we obtain an induced copy of $P$ with elements in $P_1$ represented by elements in $\F \cap R_i^j$ and elements in $P_2$ represented by elements in $\F \cap R_j^i$.

For $\{i,j\}\in\binom{[n]}{2}$, let $\theta_{ij} = \hb(\F; R_{ij}) + \hb(\F; R_i^j) + \hb(\F; R_j^i) + \hb(\F; R^{ij})$, and let $\theta = \sum_{\{i,j\}\in\binom{[n]}{2}} \theta_{ij}$.  Note that $\{R_{ij}, R_i^j, R_j^i, R^{ij}\}$ is a partition of $\tsupn$ and therefore each $A\in\F$ contributes to exactly one term in $\theta_{ij}$.

Consider $A\in\F$, and let $k=|A|$.  We compute the contribution of $A$ to $\theta$.  There are $\binom{k}{2}$ pairs $\{i,j\}$ such that $A\in R_{ij}$, and each such term contributes $1/\binom{n-2}{k-2}$ to $\theta$.  There $k(n-k)$ pairs $\{i,j\}$ such that $A\in R_i^j\cup R_j^i$, and each such term contributes $1/\binom{n-2}{k-1}$ to $\theta$.  There are $\binom{n-k}{2}$ pairs $\{i,j\}$ such that $A\in R^{ij}$ and each such term contributes $1/\binom{n-2}{k}$ to $\theta$.  Adding these, we find that the contribution of $A$ to $\theta$ is 
\[ \binom{k}{2}\cdot \frac{1}{\binom{n-2}{k-2}} + k(n-k)\cdot \frac{1}{\binom{n-2}{k-1}} + \binom{n-k}{2}\cdot \frac{1}{\binom{n-2}{k}}.\]
When $2\le k\le n-2$, this readily simplifies to $4\binom{n}{2}\cdot 1/\binom{n}{k}$, which is $4\binom{n}{2}$ times the contribution of $A$ to $\hb(\F)$.  When $k$ is outside this range, some of the coefficients are zero (and the corresponding contributions are undefined).  If $k\in\{1,n-1\}$, the sum simplifies to $3\binom{n}{2}\cdot 1/\binom{n}{1}$, which is $3\binom{n}{2}$ times the contribution of $A$ to $\hb(\F)$.  Finally, when $k\in\{0,n\}$, the sum simplifies to $\binom{n}{2}\cdot 1/\binom{n}{0}$, which is $\binom{n}{2}$ times the contribution of $A$ to $\hb(\F)$.  Since $\{A\in\F \st |A|\in\{0,n\}\}$ contributes at most $2$ to $\hb(\F)$ and $\{A\in\F \st |A|\in\{1,n-1\}\}$ also contributes at most $2$ to $\hb(\F)$, it follows that 
\begin{align*}
\theta &\ge 2\cdot \binom{n}{2} + 2\cdot 3\binom{n}{2} + (\hb(\F) - 4)\cdot 4\binom{n}{2} \\
	& = \binom{n}{2}(4\hb(\F) - 8).
\end{align*}
Choose a pair $\{i,j\}$ so that $\theta_{ij} \ge 4\hb(\F) - 8$.  Because $\F$ is balanced, we have that each of the 4 terms in the sum defining $\theta_{ij}$ is at most $\hb(\F)$.  It follows that each term is at least $\hb(\F) - 8$, and $\hb(\F) - 8 > \alpha$.
\end{proof}

\section{Posets of Height 2}\label{sec:bip}

In this section, we show that Conjecture~\ref{conj:lt} holds for posets of height $2$.  To do so, we need a special family of posets.  Let $\univ{r}$ be the poset with antichains $\{a_1, \ldots, a_r\}$ and $\{b_S\st S\subseteq [r]\}$ where $a_j \le b_S$ if and only if $j\in S$.  The \emph{dual} of a poset $P$ is the poset $Q$ on the same elements with $x \le_Q y$ if and only if $y \le_P x$.  Let $\univd{r}$ be the dual of $\univ{r}$; this poset has the same elements but now $b_S \le a_j$ if and only if $j\in S$.

\tikzstyle vertex=[circle,fill=black!35,inner sep=1pt]
\tikzstyle edge=[draw,line width=1pt]

\begin{figure}
    \begin{center}
		\begin{tikzpicture}[xscale=1.25]
			%\scriptsize
			\foreach \x/\n in {-1/1, 0/2, 1/3}
			{
				\node[vertex] at (\x, 1) (E\n) {} ;
				\node[anchor=south] at (E\n) {$a_\n$} ;
			}

			\begin{scope}[xshift={-3.5cm}, every path/.style={edge}]
				\node[vertex] at (0, 0) (A0) {} ;

				\node[vertex] at (1, 0) (A1) {} ;
				\draw (A1) -- (E1) ;

				\node[vertex] at (2, 0) (A2) {} ;
				\draw (A2) -- (E2) ;

				\node[vertex] at (3, 0) (A3) {} ;
				\draw (A3) -- (E3) ;

				\node[vertex] at (4, 0) (A12) {} ;
				\draw (A12) -- (E1) ;
				\draw (A12) -- (E2) ;

				\node[vertex] at (5, 0) (A13) {} ;
				\draw (A13) -- (E1) ;
				\draw (A13) -- (E3) ;

				\node[vertex] at (6, 0) (A23) {} ;
				\draw (A23) -- (E2) ;
				\draw (A23) -- (E3) ;

				\node[vertex] at (7, 0) (A123) {} ;
				\draw (A123) -- (E1) ;
				\draw (A123) -- (E2) ;
				\draw (A123) -- (E3) ;

                \foreach \n/\txt in {
				    A0/\emptyset, A1/\{1\}, A2/\{2\}, A3/\{3\},
				    A12/{\{1,2\}}, A13/{\{1,3\}}, A23/{\{2,3\}}, A123/{\{1,2,3\}}
				    }
				{{
				    \path (\n) +(0, -2ex) node {$b_{\txt}$} ;
				}}
			\end{scope}
		\end{tikzpicture}
	\end{center}
    \caption {$\univd{3}$}\label{fig:univ}
\end{figure}   

Next, we show that both $\univ{r}$ and $\univd{r}$ are universal for posets of height $2$.

\begin{lemma}\label{lem:univP}
If $P$ is an $r$-element poset of height at most $2$, then $P$ is a subposet of both $\univ{r}$ and $\univd{r}$.
\end{lemma}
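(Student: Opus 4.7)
The plan is to decompose $P$ into two antichains and exploit the fact that $\univ{r}$ has $r$ lower elements $a_j$ and one upper element $b_S$ for every subset $S\subseteq [r]$, giving just enough labels to both index the minimal elements of $P$ and uniquely tag the rest.

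Since $P$ has height at most $2$, let $L$ be the set of minimal elements of $P$ and let $U = P\setminus L$. Then $L$ is an antichain by definition. The set $U$ is also an antichain: if $y_1, y_2 \in U$ with $y_1 <_P y_2$, then some $z <_P y_1 <_P y_2$ gives a chain of $3$ elements, contradicting the height hypothesis. Every comparability of $P$ therefore has the form $x <_P y$ with $x\in L$ and $y\in U$. Write $\ell = |L|$ and $u = |U|$, so $\ell + u = r$, and index the elements so that $L = \{p_1,\ldots,p_\ell\}$ and $U = \{p_{\ell+1},\ldots,p_r\}$.

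Define $\phi \colon P \to \univ{r}$ by $\phi(p_i) = a_i$ for $i \in [\ell]$, and for $i \in \{\ell+1,\ldots,r\}$ put
\[ S_i = \{j\in [\ell] \st p_j <_P p_i\} \cup \{i\} \]
and $\phi(p_i) = b_{S_i}$. Since each $S_i$ contains the unique tag $i \in \{\ell+1,\ldots,r\}$, the sets $S_i$ are pairwise distinct and $\phi$ is injective. To check that $\phi$ is an induced subposet embedding: pairs of indices in $[\ell]$ map to incomparable elements among the $a_j$'s (matching the antichain $L$); pairs in $\{\ell+1,\ldots,r\}$ map to incomparable $b_S$'s (matching $U$); and for a mixed pair with $i\in [\ell]$, $j\in\{\ell+1,\ldots,r\}$, we have $a_i\le b_{S_j}$ iff $i\in S_j$, which, because $i \le \ell$, holds iff $p_i <_P p_j$, as required.

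The embedding into $\univd{r}$ follows by duality: the dual $P^d$ still has height at most $2$, so it embeds into $\univ{r}$ by the argument above, and then $P$ embeds into $(\univ{r})^d = \univd{r}$. No serious obstacle is anticipated; the one point worth verifying is that the padding index $i$ appended to $S_i$ cannot produce a false comparability, and this works because only $a_1,\ldots,a_\ell$ lie in the image of $L$, so the unused labels $a_{\ell+1},\ldots,a_r$ are free to serve as unique tags without affecting the induced ordering.
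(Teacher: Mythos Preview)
Your proof is correct and follows essentially the same approach as the paper. The only cosmetic difference is that the paper embeds $P$ directly into $\univd{r}$ by sending the \emph{maximal} elements of $P$ to the $a_i$'s and then invokes duality for $\univ{r}$, whereas you embed into $\univ{r}$ first by sending the \emph{minimal} elements to the $a_i$'s and then dualize; the tagging trick $S_i \mapsto S_i \cup \{i\}$ to force distinctness is identical in both.
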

\begin{proof}
It suffices to show that $P$ is a subposet of $\univd{r}$, since $P' \subseteq \univd{r}$ implies that $P \subseteq \univ{r}$, where $P'$ is the dual of $P$.  Let $x_1, \ldots, x_r$ be the elements of $P$, indexed so that the set of maximal elements of $P$ is $\{x_1, \ldots, x_m\}$.  For $1\le i\le m$, we identify $x_i$ in $P$ with $a_i$ in $\univd{r}$.  For $m+1 \le i \le r$, let $S_i = \{j\in[m]\st x_i \le x_j\} \cup \{i\}$, and we identify $x_i$ in $P$ with $b_{S_i}$ in $\univd{r}$.  For $m+1 \le i, j \le r$, we have that $i\in S_j$ if and only if $i=j$, and therefore the sets $S_{m+1}, \ldots, S_r$ are distinct.
\end{proof}

By Lemma~\ref{lem:univP}, to establish Conjecture~\ref{conj:lt} for posets of height 2, it suffices to show that each family with sufficiently large Lubell mass contains $\univ{r}$ or $\univd{r}$.  The following concept is key.  Let $\F\subseteq \tsupn$ and let $A\in \F$.  An element $i\in A$ is a \emph{pivot} of $A$ if there exists $j\not\in A$ such that $A - \{i\} \cup \{j\} \in \F$.  We say that $A$ is \emph{$\gamma$-flexible} if it has at least $\gamma |A|$ pivots.  As it turns out, if each set in $\F$ is not too large and $\F$ has no $\gamma$-flexible sets, then the Lubell function of $\F$ is bounded.

\begin{lemma}\label{lem:flex}
Let $\gamma$ and $\delta$ be a real numbers in the range $[0,1)$ and let $\F$ be a family of subsets of $[n]$ such that $|A| \le \delta n$ for each $A\in\F$.  If $\F$ does not contain a $\gamma$-flexible set, then $\hb(\F) < 1 + \frac{1}{1-\gamma}\ln \frac{1}{1-\delta}$.
\end{lemma}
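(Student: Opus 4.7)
The plan is to bound each level ratio $|\F_k|/\binom{n}{k}$ via a shadow-style injection and then sum the resulting harmonic tail, comparing it with $\ln \frac{1}{1-\delta}$.

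The crucial observation is that the non-pivot condition is really a uniqueness condition: if $i$ is a non-pivot of $A \in \F_k$, then by definition $A - i + j \notin \F$ for every $j \notin A$, which says exactly that $A$ is the unique element of $\F_k$ containing $A - i$. Since $\F$ has no $\gamma$-flexible set, each $A \in \F_k$ has strictly more than $(1-\gamma)k$ non-pivots. Consider the map $\Phi(A, i) = A - i$ from pairs $(A, i)$ with $A \in \F_k$ and $i$ a non-pivot of $A$ into $\binom{[n]}{k-1}$. The uniqueness property forces $\Phi$ to be injective: if $A_1 - i_1 = A_2 - i_2$, then $A_1$ and $A_2$ are both $\F_k$-extensions of the common $(k-1)$-set $A_1 - i_1$, so $A_1 = A_2$ and hence $i_1 = i_2$. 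Therefore
\[ (1-\gamma) k |\F_k| < \binom{n}{k-1}, \]
and dividing by $\binom{n}{k}$ yields $|\F_k|/\binom{n}{k} < 1/((1-\gamma)(n-k+1))$ for every $k \ge 1$.

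The hypothesis $|A| \le \delta n$ restricts the contributing levels to $k \le \lfloor \delta n \rfloor$. Summing and applying the standard estimate $\sum_{j=m+1}^{n} 1/j \le \ln(n/m)$ with $m = n - \lfloor \delta n \rfloor \ge (1-\delta) n$,
\[ \hb(\F) \le 1 + \sum_{k=1}^{\lfloor \delta n \rfloor} \frac{|\F_k|}{\binom{n}{k}} < 1 + \frac{1}{1-\gamma} \ln \frac{1}{1-\delta}, \]
where the leading $1$ absorbs any contribution from $\emptyset \in \F$ and strict inequality is delivered by the per-level bound as soon as $\F$ contains a nonempty set.

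The main obstacle is spotting the right injection: the non-pivot condition is usually read as a scarcity constraint on horizontal swaps, but its actual content is that $A$ is the sole $\F_k$-extension of $A - i$. Once this reinterpretation is in hand, the level bound is immediate and the harmonic-integral estimate is routine.
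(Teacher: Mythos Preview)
Your proof is correct and follows essentially the same approach as the paper: both arguments observe that a non-pivot $i$ of $A$ makes $A$ the unique member of $\F_k$ extending $A-i$, count such pairs to get $(1-\gamma)k|\F_k| \le \binom{n}{k-1}$, and then sum the resulting bound $\frac{1}{(1-\gamma)(n-k+1)}$ over $1\le k\le \delta n$ using the integral comparison with $\ln\frac{1}{1-\delta}$. Your explicit framing as an injection $\Phi$ and your observation that non-flexibility gives \emph{strictly} more than $(1-\gamma)k$ non-pivots are cosmetic refinements of the same idea.
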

\begin{proof}
Let $L_k = \F \cap \binom{[n]}{k}$.  If $A\in L_k$ and $i\in A$ but $A - B\ne \{i\}$ for each $B\in L_k$, then $A$ is the only set in $L_k$ that contains $A - \{i\}$.  Since no set in $\F$ is $\gamma$-flexible, for each $A\in L_k$, there are at least $(1-\gamma)k$ indices $i\in A$ such that $A$ is the only set in $L_k$ containing the $(k-1)$-set $A - \{i\}$.  It follows that $|L_k|\cdot (1-\gamma)k \le \binom{n}{k-1}$.  Therefore
\begin{align*}
\hb(\F) &= \sum_{0 \le k \le \delta n} |L_k| \cdot \frac{1}{\binom{n}{k}} \\
	&\le 1 + \sum_{1 \le k \le \delta n} |L_k| \cdot \frac{1}{\binom{n}{k}} \\
	&\le 1 + \sum_{1 \le k \le \delta n} \frac{1}{1-\gamma} \cdot k \frac{\binom{n}{k-1}}{\binom{n}{k}} \\
	&=   1 + \frac{1}{1-\gamma} \sum_{1 \le k \le \delta n} \frac{1}{n-k+1}\\
	&<   1 + \frac{1}{1-\gamma} \ln \frac{1}{1-\delta}.
\end{align*}
\end{proof}

In most applications of Lemma~\ref{lem:flex}, we set $\delta =1/2$.  Let $\F \subseteq\tsupn$.  For a set of indices $T$, the \emph{projection} of $\F$ onto $T$ is the family $\{ A \cap T\st A\in \F\}$.  To find a copy of $\univd{r}$ in $\F$, we first find a $\gamma$-flexible set $A$ such that $\hb_A^-(\F)$ is large.  Then, we will find a particular structure in the projection of $\F$ onto $T$.  To force this structure, we will need the projection to have large Lubell mass.

\begin{lemma}\label{lem:proj}
Let $\F$ be a family of subsets of $[n]$, let $T\subseteq [n]$ with $|T| = t$, and let $\F'$ be the projection of $\F$ onto $T$.  We have that $\hb_T^-(\F') \ge \frac{t+1}{n+1} \hb(\F)$.
\end{lemma}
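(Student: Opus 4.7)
The plan is to partition $\F$ by its projection fibers and handle each fiber by a single binomial identity. For each $B\in\F'$, let $\F_B = \{A\in\F\st A\cap T = B\}$; these fibers partition $\F$, and each $A\in\F_B$ has the form $A = B\cup S$ with $S\subseteq [n]\setminus T$. In particular, the number of members of $\F_B$ of size $|B|+k$ is at most $\binom{n-t}{k}$. Since $\hb(\F) = \sum_{B\in\F'}\sum_{A\in\F_B} 1/\binom{n}{|A|}$ and $\hb_T^-(\F') = \sum_{B\in\F'} 1/\binom{t}{|B|}$, the desired bound $\hb_T^-(\F') \ge \frac{t+1}{n+1}\hb(\F)$ reduces to the per-fiber estimate
\begin{equation*}
\sum_{k=0}^{n-t}\frac{\binom{n-t}{k}}{\binom{n}{|B|+k}} \le \frac{n+1}{t+1}\cdot\frac{1}{\binom{t}{|B|}}.
\end{equation*}

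The next step is a clean algebraic rewriting. Writing $b=|B|$, a direct factorial check gives the identity $\binom{n}{t}\binom{t}{b}\binom{n-t}{k} = \binom{n}{b+k}\binom{b+k}{b}\binom{n-b-k}{t-b}$, which rearranges to
\begin{equation*}
\frac{\binom{n-t}{k}}{\binom{n}{b+k}} = \frac{1}{\binom{n}{t}\binom{t}{b}}\binom{b+k}{b}\binom{(t-b)+(n-t-k)}{t-b}.
\end{equation*}
Summing over $k$ from $0$ to $n-t$, the numerator becomes a Vandermonde-style convolution: using $\sum_{k=0}^{N}\binom{a+k}{a}\binom{c+N-k}{c} = \binom{a+c+N+1}{a+c+1}$ with $a=b$, $c=t-b$, $N=n-t$, the sum collapses to $\binom{n+1}{t+1}$. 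Since $\binom{n+1}{t+1}/\binom{n}{t} = (n+1)/(t+1)$, the per-fiber estimate holds with equality, and summing over $B\in\F'$ yields the lemma.

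There is no real obstacle beyond recognizing the identity; the Vandermonde-type sum is standard once the summand is put in the form $\binom{b+k}{b}\binom{n-b-k}{t-b}$. A probabilistic route is also possible: generate a random maximal chain of $\tsupn$ from a uniform permutation of $[n]$ and record the order in which elements of $T$ are added, which produces a uniform random maximal chain from $\emptyset$ to $T$; the per-fiber identity then amounts to a statement about $\Pr[\text{the projected chain passes through }B]$ compared to $\Pr[\text{the original chain passes through some }A\in\F_B]$. The algebraic argument above is shorter and, as noted, shows the bound is tight fiber by fiber.
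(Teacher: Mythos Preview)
Your proof is correct and follows essentially the same strategy as the paper's: both partition $\F$ into fibers $\G_B = \{A : A\cap T = B\}$, observe that the worst case is when each fiber is full, and then establish the per-fiber identity $\hb(\G_B) = \frac{1}{\binom{t}{|B|}}\cdot\frac{n+1}{t+1}$. The only difference is in how that identity is verified---the paper computes $\hb(\G_B)$ probabilistically (via the expected length of the gap between the $k$th and $(k+1)$st elements of $T$ in a random permutation of $[n]$), whereas you evaluate the same sum algebraically via a Vandermonde-type convolution; you even mention the probabilistic route as an alternative.
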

\begin{proof}
We show that $\hb(\F) \le \hb_T^-(\F') \frac{n+1}{t+1}$.  For each $B\subseteq T$, let $\G_B = \{A\subseteq [n]\st A\cap T = B\}$.  Note that $B\in \F'$ if and only if $\G_B \cap \F$ is non-empty, and when $\G_B \cap \F$ is non-empty, adding additional members of $\G_B$ to $\F$ does not change $\F'$.  Hence we may assume that $\F$ is the union of some of the families in $\{\G_B\st B\subseteq T\}$.  

Next, we compute $\hb(\G_B)$.  Let $|B| = k$, let $X$ be the number of times that a random full chain meets $\G_B$, and consider the corresponding permutation $\sigma$ of $[n]$.  Let $\sigma'$ be the restriction of $\sigma$ to $T$.  Note that $\sigma$ determines $X$ as follows.  If the first $k$ elements of $\sigma'$ are not exactly the elements of $B$, then $X=0$.  So, consider the case where the first $k$ elements of $\sigma'$ are the elements of $B$.  The $k$th element in $\sigma'$ contributes $1$ to $X$ and additional contributions arise as follows.  The elements of $T$ partition $[n] - T$ into $t+1$ intervals in $\sigma$, and each element in $[n] - T$ between the $k$th element of $\sigma'$ and the $(k+1)$st element of $\sigma'$ contributes $1$ to $X$.  Since each element in $[n] - T$ is equally likely to occur in any of the $t+1$ intervals, the expected size of each interval is $(n-t)/(t+1)$. Therefore
\[ \hb(\G_B) = \frac{1}{\binom{t}{k}}\left(1+\frac{n-t}{t+1}\right) = \frac{1}{\binom{t}{k}}\frac{n+1}{t+1}.\]

It follows that 
\[ \hb(\F) \le \sum_{B\in\F'} \hb(\G_B) = \frac{n+1}{t+1} \sum_{B\in\F'} \frac{1}{\binom{t}{|B|}}  = \frac{n+1}{t+1}\hb_T^-(\F'). \]
\end{proof}

\newcommand{\vcdim}{\mathrm{VCdim}}
For $R\subseteq [n]$, the family $\F$ \emph{shatters} $R$ if the projection of $\F$ onto $R$ is $R$.  The \emph{VC-dimension} of $\F$, denoted $\vcdim(\F)$, is the maximum size of a shattered subset of $[n]$.  The following classical lemma has been known since the introduction of the VC-dimension (see~\cite{VC, Sauer, Shelah}). 

\begin{lemma}[Shatter Function Lemma]\label{lem:VCdimS}
If $\F \subseteq \tsupn$ and $\vcdim(\F) < d$, then $|\F| \le \sum_{k=0}^{d-1} \binom{n}{k}$.
\end{lemma}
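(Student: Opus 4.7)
The plan is to prove this Sauer--Shelah type inequality by induction on $n$, decomposing $\F$ according to its interaction with a distinguished element, say $n$. The base case $n = 0$ is immediate, since then $\F \subseteq \{\emptyset\}$ and the right-hand side is $\binom{0}{0} = 1$.

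For the inductive step, I would define two families on ground set $[n-1]$:
\[ \F_1 = \{A \setminus \{n\} : A \in \F\}, \qquad \F_2 = \{A \subseteq [n-1] : A \in \F \text{ and } A \cup \{n\} \in \F\}. \]
A direct count gives $|\F| = |\F_1| + |\F_2|$: for each $B \subseteq [n-1]$, the number of sets in $\F$ whose restriction to $[n-1]$ equals $B$ is $0$, $1$, or $2$, and in each case the contributions to $|\F_1| + |\F_2|$ match.

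The crux is the VC-dimension bound on each family. Trivially $\vcdim(\F_1) \le \vcdim(\F) < d$. For $\F_2$, I would show that if $R \subseteq [n-1]$ is shattered by $\F_2$, then $R \cup \{n\}$ is shattered by $\F$: for any target $S \subseteq R \cup \{n\}$, choose $A \in \F_2$ with $A \cap R = S \cap R$; both $A$ and $A \cup \{n\}$ then lie in $\F$, and exactly one of them meets $R \cup \{n\}$ in $S$, depending on whether $n \in S$. Hence $\vcdim(\F_2) < d - 1$. Applying the induction hypothesis to both $\F_1$ and $\F_2$ gives
\[ |\F| \le \sum_{k=0}^{d-1} \binom{n-1}{k} + \sum_{k=0}^{d-2} \binom{n-1}{k}, \]
and Pascal's identity $\binom{n-1}{k-1} + \binom{n-1}{k} = \binom{n}{k}$ collapses this to $\sum_{k=0}^{d-1} \binom{n}{k}$, as desired.

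Because the lemma is classical, I do not expect any genuine obstacle; the only point needing a moment of care is verifying that shattering in $\F_2$ forces shattering of a strictly larger set in $\F$, which is precisely the mechanism producing the ``$-1$'' drop in VC-dimension that the induction needs. An alternative route through a shifting/compression argument (replacing $\F$ with a down-closed family of the same size and VC-dimension, then directly counting) is available, but the two-family induction above is the shortest path.
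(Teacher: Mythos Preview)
Your argument is the standard induction proof of the Sauer--Shelah lemma and is correct. Note, however, that the paper does not supply its own proof of this lemma: it is stated as a classical result with references to~\cite{VC, Sauer, Shelah}, so there is no in-paper proof to compare against. Your write-up would serve perfectly well as a self-contained proof; the only minor point to tidy is the degenerate case $d=0$ (where $\vcdim(\F)<0$ forces $\F=\emptyset$ directly), so that the induction hypothesis on $\F_2$ with parameter $d-1$ is always applied with a nonnegative parameter.
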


The bound in Lemma~\ref{lem:VCdimS} is best possible, since $\{A\subseteq[n]\st |A| \le d-1\}$ has VC-dimension $d-1$.  Obtaining the largest Lubell mass of a family $\F$ with $\vcdim(\F) < d$ is more subtle.  Since $|\F| \le \sum_{k=0}^{d-1} \binom{n}{k}$ implies that $\hb(\F) < 2d$, we obtain the following corollary.

\begin{cor}\label{cor:VCdimL}
If $\F \subseteq \tsupn$ and $\vcdim(\F) < d$, then $\hb(\F) < 2d$.
\end{cor}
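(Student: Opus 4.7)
My plan is to reduce the statement to a purely enumerative claim via the Shatter Function Lemma, and then prove that claim by a direct greedy argument on the level sizes. From $\vcdim(\F) < d$, Lemma~\ref{lem:VCdimS} gives $|\F| \le M$ where $M := \sum_{k=0}^{d-1}\binom{n}{k}$, so it suffices to show that every $\F \subseteq \tsupn$ with $|\F|\le M$ satisfies $\hb(\F) < 2d$.

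I would prove this by splitting $\F$ by set size. Let $\F^- = \{A\in\F \st |A| \le \lfloor n/2 \rfloor\}$ and $\F^+ = \F \setminus \F^-$, so that $\hb(\F) = \hb(\F^-) + \hb(\F^+)$. Inside $\F^-$, the contribution $1/\binom{n}{|A|}$ of each set is a non-increasing function of $|A|$, so writing $\hb(\F^-) = \sum_k |L_k|/\binom{n}{k}$ with $|L_k| \le \binom{n}{k}$, the greedy (linear programming) principle says that $\hb(\F^-)$ is maximized, subject to $|\F^-| \le M$, by filling the smallest levels first. When $d-1 \le \lfloor n/2 \rfloor$, this yields $\hb(\F^-) \le d$ with equality exactly when $\F^- = \{A\subseteq [n] \st |A|\le d-1\}$; when $d-1 > \lfloor n/2 \rfloor$, one instead gets the stronger bound $\hb(\F^-) \le \lfloor n/2 \rfloor + 1 < d$. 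A symmetric argument in $\F^+$ gives $\hb(\F^+) \le d$, and hence $\hb(\F) \le 2d$.

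For the strict inequality, I would observe that simultaneous equality in the two halves would force $\F^- = \{A \st |A|\le d-1\}$ and $\F^+ = \{A \st |A|\ge n-d+1\}$, so that $|\F| = 2M$, contradicting $|\F|\le M$ (since $M\ge 1$ for $d\ge 1$). I do not anticipate any real obstacle: the weight $\binom{n}{k}^{-1}$ is unimodal with its minimum in the middle, which makes the greedy/LP step transparent, and the strictness is just the observation---already hinted at in the paragraph preceding the corollary---that the two extremal families attaining the bound in the two halves together consume a budget of $2M$, not $M$.
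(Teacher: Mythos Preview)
Your proposal is correct and takes the same approach as the paper: invoke the Shatter Function Lemma to get $|\F|\le M:=\sum_{k=0}^{d-1}\binom{n}{k}$, and then deduce $\hb(\F)<2d$ from this size bound alone. The paper states that second implication in a single sentence without justification; your greedy/LP argument on the level sizes is precisely the natural way to fill in that omitted step, and your strictness argument (both halves attaining $d$ would force $|\F^-|=|\F^+|=M$, hence $|\F|=2M>M$) is the right observation. One minor remark: the ``symmetric'' halves $\F^-$ and $\F^+$ differ in their number of levels by one when $n$ is even, so the case split is slightly asymmetric at the boundary $d=\lfloor n/2\rfloor+1$, but this only makes one side strictly below $d$ and does not affect the conclusion.
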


Perhaps surprisingly, the bound in Corollary~\ref{cor:VCdimL} is essentially best possible.  For a positive integer $t$, partition $[n]$ into sets $Z_1, \ldots, Z_t$ of nearly equal size.  Let $\G = \{A\subseteq [n]\st |A| > n - \floor{(1-1/t)d}\}$, let $\HH$ be the family of all sets $A\subseteq [n]$ such that $|A \cap Z_i| < d/t$ for all $i$, and let $\F = \G \cup \HH$.  First, we claim that $\vcdim(\F) < d$.  Indeed, if $R \in\binom{[n]}{d}$, then for some part $Z_i$, we have that $|R \cap Z_i| \ge d/t$.  Let $S$ be a subset of $R \cap Z_i$ of size $\ceil{d/t}$.  We show that if $A\in \F$, then $A \cap R \ne S$, which implies that $\F$ does not shatter $R$.  Suppose that $A \subseteq [n]$ and $A \cap R = S$.  We show that $A \not\in \F$.  Indeed, since $A \cap R = S$, we have that $S \subseteq A$.  Hence $|A \cap Z_i| \ge |S \cap Z_i| \ge d/t$, and so $A\not\in \HH$.  On the other hand, $A$ is disjoint from $R - S$, and $|R - S| \ge d - \ceil{d/t}$.  Therefore $|A| \le n - (d - \ceil{d/t}) = n - \floor{(1-1/t)d}$ and hence also $A\not\in \G$.  Hence, $\vcdim(\F) < d$.

Note that $\hb(\G) = \floor{(1-1/t)d}$ and that $\hb(\HH) = \sum_k p_k$, where $p_k$ is the probability that a random $k$-set is in $\HH$.  For $k<d/t$, we have that $p_k = 1$.  Also, $p_k \ge 1 - tq_k$, where $q_k$ is the probability that a random $k$-set meets a fixed set $U$ of size $\ceil{n/t}$ in at least $d/t$ points.  Here, we need a concentration inequality.

\begin{lemma}\label{lem:ci}
Let $U\subseteq [n]$, let $\gamma = |U|/n$, and let $\delta \ge 0$.  If $A$ is a random $k$-set and $X = |A \cap U|$, then 
\[ \Pr[X \ge (1+\delta)\gamma k] \le e^{-\frac{(\delta\gamma)^2}{2} k}. \]
\end{lemma}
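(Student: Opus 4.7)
The statement is a Chernoff-type tail bound for the hypergeometric distribution, so the plan is to express $X$ as a sum of indicator variables and then invoke a standard concentration inequality that handles sampling without replacement.

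First I would realize the random $k$-set $A$ as the set of the first $k$ entries of a uniformly random permutation of $[n]$. Writing $Z_i$ for the indicator that the $i$-th entry of the permutation lies in $U$, we have $X = \sum_{i=1}^{k} Z_i$, each $Z_i$ has marginal mean $\gamma$, and $\E[X] = \gamma k$. Note that the $Z_i$ are not independent, but they are exchangeable and negatively associated, which is precisely the regime in which Hoeffding-type bounds apply.

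From here the cleanest route is Hoeffding's inequality in its sampling-without-replacement formulation: since $Z_1,\ldots,Z_k \in [0,1]$, one has $\Pr[X - \gamma k \ge t] \le e^{-2t^2/k}$ for all $t \ge 0$. Substituting $t = \delta\gamma k$ yields $\Pr[X \ge (1+\delta)\gamma k] \le e^{-2(\delta\gamma)^2 k}$, which is stronger than the required bound by a factor of $4$ in the exponent. Alternatively, I could use Hoeffding's reduction theorem --- that the moment generating function of a hypergeometric variable is dominated by that of the corresponding binomial $B(k,\gamma)$ --- and then apply the standard multiplicative Chernoff bound to $B(k,\gamma)$; optimizing $\lambda$ in $e^{-\lambda(1+\delta)\gamma k}\E[e^{\lambda X}]$ produces a bound of the same shape.

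The only conceptual point is that $X$ is hypergeometric rather than binomial, so an off-the-shelf Chernoff bound for independent Bernoullis does not directly apply; both Hoeffding's inequality and Hoeffding's reduction theorem are exactly designed to bridge this gap. Since the bound required by the lemma is quite loose (weaker than the usual Chernoff bound by a factor of $\gamma$ in the exponent), there is comfortable slack in either approach, and I do not anticipate any substantive obstacle in the proof.
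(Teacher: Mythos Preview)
Your proposal is correct but proceeds differently from the paper. The paper builds the Doob martingale: reveal the elements $a_1,\ldots,a_k$ of $A$ one at a time, set $X_i=\E[X\mid a_1,\ldots,a_i]$, observe that changing any single $a_j$ alters $|A\cap U|$ by at most $1$ so that $|X_i-X_{i-1}|\le 1$, and then apply Azuma's inequality to get $\Pr[X-\gamma k\ge t]\le e^{-t^2/(2k)}$, which with $t=\delta\gamma k$ is exactly the stated bound. Your route via Hoeffding's inequality for sampling without replacement (or the Hoeffding reduction to the binomial) is more direct for this specific hypergeometric setting and, as you note, yields the sharper exponent $-2(\delta\gamma)^2 k$. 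The paper's martingale argument is more generic and explains why the constant in the lemma is $1/2$ rather than $2$; your approach trades that generality for a tighter bound with no extra work. Either is entirely adequate here, since the lemma is used only qualitatively and the slack in the exponent is irrelevant to the application.
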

\begin{proof}
We select $A$ by iteratively choosing elements $a_1, \ldots, a_k$ from those in $[n]$ not already selected.  Define $X_i$ to be the random variable $\E[X | a_1, \ldots, a_i]$.  Note that $X_k = |A \cap U| = X$, $X_0 = \E[X] = \gamma k$, and $X_0, \ldots, X_k$ is a martingale.  Also, since altering any one of $a_1, \ldots, a_k$ changes $|A \cap U|$ by at most $1$, we have $|X_i - X_{i-1}| \le 1$.  The lemma now follows from Azuma's inequality.
\end{proof}

Suppose that $n \ge 2t^2$ and $k \le (1- 1/t) d$.  Let $\gamma = |U|/n = \ceil{n/t}/n$, and note that $\gamma \le \frac{1}{t} + \frac{1}{n}$.  Since $k\le (1-1/t) d \le \frac{1}{1+t/n} d$, we may set $\delta = d/(t\gamma k) - 1$ so that $\delta \ge 0$ and $(1+\delta)\gamma k = d/t$.  Now, we compute that $\delta\gamma = \frac{d}{tk} - \gamma \ge \frac{d}{t(1-1/t)d} - \gamma = \frac{1}{t-1} - \gamma \ge \frac{1}{t-1} - (\frac{1}{t} + \frac{1}{n}) \ge \frac{1}{2t^2}$.  Therefore Lemma~\ref{lem:ci} shows that $q_k \le e^{-\frac{(\delta\gamma)^2}{2} k} \le e^{-\frac{1}{8t^4} k}$.  When $k \ge d/t$, this reduces to $q_k \le e^{-\frac{1}{8t^5} d}$.  Hence, we have that 
\begin{align*}
\hb(\HH) &= \sum_{k = 0}^n p_k\\
&\ge \left(\sum_{k< d/t} 1 \right) +  \left(\sum_{d/t \le k \le (1-1/t)d} 1 - tq_k \right)\\
&\ge (1-1/t)d - t\left(\sum_{d/t \le k \le (1-1/t)d} e^{-\frac{1}{8t^5} d}\right) \\
&\ge (1-1/t)d - tde^{-\frac{1}{8t^5} d}.
\end{align*}
For $n$ sufficiently large, $\G$ and $\HH$ are disjoint, and hence $\hb(\F) = \hb(\G) + \hb(\HH) \ge \floor{(1-1/t)d} + (1-1/t)d - tde^{-\frac{1}{8t^5} d}$.  Setting, for example, $t=d^{1/6}$, we have that $\hb(\F) \ge (2-o(1))d$ as $d\to\infty$.

\begin{lemma}\label{lem:univL}
Let $\gamma$ be a constant satisfying $0<\gamma<1$.  If $\F$ is a family of subsets of $[n]$ and $\hb(\F)>4r/\gamma+2\ln(2)/(1-\gamma) + 2$, then $\F$ contains $\univ{r}$ or $\F$ contains $\univd{r}$.
\end{lemma}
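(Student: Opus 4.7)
The plan is to extract a $\gamma$-flexible set $A\in\F$ carrying many sets of $\F$ below it, shatter a size-$r$ set $R$ of pivots of $A$ via Corollary~\ref{cor:VCdimL}, and use $R$ together with $A$ to realize $\univd{r}$ in $\F$. The complementary case is handled by complementation $A\mapsto[n]\setminus A$, which preserves the Lubell function and interchanges $\univ{r}$ with $\univd{r}$; the same construction then produces $\univ{r}$.

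By complementation I may assume $\F_{\le}:=\{A\in\F:|A|\le n/2\}$ carries at least half the Lubell mass, so $\hb(\F_{\le})>2r/\gamma+\ln(2)/(1-\gamma)+1$. The sets of $\F_{\le}$ that are not $\gamma$-flexible in $\F$ form a subfamily containing no $\gamma$-flexible element and confined to sizes at most $n/2$, so Lemma~\ref{lem:flex} with $\delta=1/2$ bounds its Lubell mass by $1+\ln(2)/(1-\gamma)$. Hence the family $\F_{\mathrm{fl}}$ of $\gamma$-flexible sets in $\F_{\le}$ satisfies $\hb(\F_{\mathrm{fl}})>2r/\gamma$, and Proposition~\ref{prop:UD} applied to $\F_{\mathrm{fl}}$ supplies $A\in\F_{\mathrm{fl}}$ with $\hb_A^-(\F)\ge\hb_A^-(\F_{\mathrm{fl}})\ge\hb(\F_{\mathrm{fl}})>2r/\gamma$. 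Let $P\subseteq A$ be the pivots of $A$, so $|P|\ge\gamma|A|$, and let $\F''$ be the projection of $\F\cap 2^A$ onto $P$. Working inside the cube $2^A$, Lemma~\ref{lem:proj} gives $\hb_P^-(\F'')\ge\tfrac{|P|+1}{|A|+1}\hb_A^-(\F)>\gamma\cdot(2r/\gamma)=2r$, and Corollary~\ref{cor:VCdimL} then forces $\vcdim(\F'')\ge r$, supplying a shattered set $R=\{t_1,\ldots,t_r\}\subseteq P$.

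To realize $\univd{r}$, fix for each $t_i\in R$ a swap element $j_{t_i}\notin A$ with $a_i:=(A\setminus\{t_i\})\cup\{j_{t_i}\}\in\F$; the $a_i$'s share the common size $|A|$ and form an antichain of $r$ elements in $\F$. For each $S\subseteq[r]$, shattering supplies $b_S\in\F\cap 2^A$ with $b_S\cap R=R\setminus\{t_j:j\in S\}$. Since $b_S\subseteq A$ and $j_{t_i}\notin A$, a direct check shows $b_S\subseteq a_i$ iff $t_i\notin b_S$ iff $i\in S$, and $a_i\not\subseteq b_S$ because $j_{t_i}\in a_i\setminus b_S$. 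The remaining step, which I expect to be the main technical obstacle, is to verify that the $b_S$'s themselves form an antichain: incomparable pairs $S,S'\subseteq[r]$ force incomparable intersections $b_S\cap R$ and $b_{S'}\cap R$, and hence incomparability of $b_S,b_{S'}$, but for $S\subsetneq S'$ the unwanted containment $b_{S'}\subseteq b_S$ in $\tsupn$ is a priori possible. Blocking this requires a careful choice of representatives within each shattering fiber—for instance, selecting the $b_S$'s iteratively in order of increasing $|S|$ using distinguishing elements of $A\setminus R$—and will likely exploit slack in the Lubell mass to keep the relevant fibers populated.
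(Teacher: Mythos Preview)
Your approach is essentially the paper's: restrict to $\{A\in\F:|A|\le n/2\}$ via complementation, discard non-$\gamma$-flexible sets using Lemma~\ref{lem:flex} with $\delta=1/2$, locate a flexible $A$ with $\hb_A^-(\F)>2r/\gamma$ (you invoke Proposition~\ref{prop:UD} on the flexible subfamily directly, whereas the paper bounds the downwardly $(2r/\gamma)$-shallow part via Corollary~\ref{cor:shallow} and picks $A$ from what remains---both routes produce the same $A$), project onto the pivot set via Lemma~\ref{lem:proj}, shatter an $r$-set $R$ via Corollary~\ref{cor:VCdimL}, and assemble $\{a_i\}\cup\{b_S\}$.

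The concern you raise at the end---that the $b_S$'s need not form an antichain when $S\subsetneq S'$---is genuine, and the paper does not address it either: it simply asserts ``we obtain a copy of $\univd{r}$'' after verifying only the $B_S$-to-$A_i$ relations. Since $B_S\cap R=S$ in the paper's labeling, nested $S\subsetneq S'$ give only $B_{S'}\not\subseteq B_S$, not the reverse, and shattering alone does not let one choose pairwise incomparable representatives (the fibers are dictated by $\F$, and there is no slack in the hypothesis earmarked for this). Your proposed iterative fix is therefore not a complete argument as written. It is worth noting, however, that this gap is harmless for the paper's actual application (Theorem~\ref{thm:bip}): the embedding of a height-$2$ poset $P$ into $\univd{r}$ in Lemma~\ref{lem:univP} uses only $b_S$'s whose index sets $S_i$ are pairwise \emph{incomparable} (each $S_i$ contains the private index $i$), and for incomparable $S,S'$ the relation $B_S\cap R=S$ forces $B_S$ and $B_{S'}$ to be incomparable automatically.
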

\begin{proof}
Let $\F$ be a family of subsets of $[n]$ with $\hb(\F)>4r/\gamma+2\ln(2)/(1-\gamma) + 2$.  We define $\overline{\F} = \{[n] - A\st A\in\F\}$.  Note that $\overline{\F}$ is the dual of $\F$ and $\hb(\F) = \hb(\overline{\F})$.  At least one of $\{A\in\F\st |A| \le n/2\}$ and $\{A\in\overline{\F}\st |A| \le n/2\}$ has Lubell mass at least $\hb(\F)/2$.  If the former occurs, then we obtain a copy of $\univd{r}$ in $\F$.  If the latter occurs, then we obtain a copy of $\univd{r}$ in $\overline{\F}$, which immediately yields a copy of $\univ{r}$ in $\F$.  Let $\F_1 = \{A\in\F\st |A|\le n/2\}$.  We may assume that $\hb(\F_1) \ge \hb(\F)/2$.

Let $\F_2$ be the set of all $A$ in $\F_1$ that are not $\gamma$-flexible, and let $\F_3$ be the set of all $A$ in $\F_1$ such that $\hb_A^-(\F_1) \le 2r/\gamma$.  By Lemma~\ref{lem:flex}, we have that $\hb(\F_2) < 1 + \ln 2/(1-\gamma)$.  By Corollary~\ref{cor:shallow}, we have that $\hb(\F_3) \le 2r/\gamma$.  Note that 
\[
\hb(\F_1) - (\hb(\F_2) + \hb(\F_3)) > \hb(\F)/2 - \left(1 + \frac{\ln 2}{1-\gamma} + \frac{2r}{\gamma} \right) > 0 
\]
Hence $\F_1 - (\F_2 \cup \F_3)$ is non-empty.

Choose $A\in\F_1 - (\F_2 \cup \F_3)$ and let $T$ be the set of all pivots in $A$.  Note that $|T|\ge \gamma|A|$ since $A\not\in \F_2$.  Let $\F'$ be the projection of the closed downset $D_{\F_1}[A]$ onto $T$.  Since $A\not\in \F_3$, we have that $\hb_A^-(\F_1) > 2r/\gamma$.  By Lemma~\ref{lem:proj}, we have that $\hb_T^-(\F') \ge \frac{|T| + 1}{|A| + 1}\hb_A^-(\F_1) > \gamma\hb_A^-(\F_1) \ge 2r$.  By Corollary~\ref{cor:VCdimL}, we have that $\vcdim(\F') \ge r$, and hence there is an $r$-set $R$ such that $R\subseteq T$ and $\F'$ shatters $R$.  For each $S\subseteq R$, there exists a set $B'_S\in\F'$ such that $B'_S \cap R = S$, and $B'_S$ extends to a set $B_S \in D_{\F_1}[A]$ such that $B_S \cap T = B'_S$.  

Note that $R\subseteq T$ and each element in $T$ is a pivot of $A$.  For each $i\in R$, find $A_i \in \F_1$ such that $A_i = (A - \{i\}) \cup \{j\}$ for some $j\in [n]$.  Since $B_S \subseteq A$, and $A - A_i = \{i\}$, we have that $B_S \subseteq A_i$ if and only if $i\not\in S$.  Therefore we obtain a copy of $\univd{r}$ in which the maximal elements are $\{A_i \st i\in R\}$ and the minimal elements are $\{B_S \st S\subseteq R\}$. 
\end{proof}

We are now able to prove our main theorem.

\begin{theorem}\label{thm:bip}
If $P$ is an $r$-element poset of height at most $2$, then $\slth(P) \le 4r + \sqrt{(32 \ln 2)r} + 6$.
\end{theorem}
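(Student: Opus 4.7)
My plan is to combine Lemma~\ref{lem:univP} with Lemma~\ref{lem:univL} and then optimize the free parameter $\gamma$.

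First I would observe that Lemma~\ref{lem:univP} lets us replace $P$ by the universal posets $\univ{r}$ and $\univd{r}$: since $P$ is an $r$-element poset of height at most $2$, every induced copy of $\univ{r}$ or $\univd{r}$ in $\F$ yields an induced copy of $P$ in $\F$. Consequently, Lemma~\ref{lem:univL} gives, for every $\gamma \in (0,1)$,
\[ \slth(P) \;\le\; \frac{4r}{\gamma} + \frac{2\ln 2}{1-\gamma} + 2. \]
So the whole task reduces to choosing $\gamma$ to minimize the right-hand side.

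Next I would set $g(\gamma) = \frac{4r}{\gamma} + \frac{2\ln 2}{1-\gamma}$ and set $g'(\gamma)=0$. This gives $\frac{4r}{\gamma^2} = \frac{2\ln 2}{(1-\gamma)^2}$, i.e.\ $\frac{1-\gamma}{\gamma} = \sqrt{\frac{\ln 2}{2r}}$, so that the optimum is
\[ \gamma^* \;=\; \frac{2\sqrt{r}}{2\sqrt{r}+\sqrt{2\ln 2}}, \qquad 1-\gamma^* \;=\; \frac{\sqrt{2\ln 2}}{2\sqrt{r}+\sqrt{2\ln 2}}. \]
Plugging in yields
\[ g(\gamma^*) \;=\; 2\sqrt{r}\bigl(2\sqrt{r}+\sqrt{2\ln 2}\bigr) + \sqrt{2\ln 2}\bigl(2\sqrt{r}+\sqrt{2\ln 2}\bigr) \;=\; 4r + 4\sqrt{2r\ln 2} + 2\ln 2. \]
Since $4\sqrt{2r\ln 2} = \sqrt{32r\ln 2}$ and $2\ln 2 + 2 < 6$, this gives the desired bound
\[ \slth(P) \;\le\; 4r + \sqrt{(32\ln 2)r} + 2\ln 2 + 2 \;\le\; 4r + \sqrt{(32\ln 2)r} + 6. \]

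There is no real obstacle here; the heavy lifting was done in Lemma~\ref{lem:univL}, and the remaining work is a routine one-variable optimization. The only thing to be mildly careful about is verifying that $\gamma^* \in (0,1)$ (immediate from the formula) so that Lemma~\ref{lem:univL} legitimately applies, and that the residual constants $2\ln 2 + 2$ fit under the slightly loosened constant $6$ stated in the theorem.
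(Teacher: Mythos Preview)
Your proof is correct and follows essentially the same approach as the paper: combine Lemma~\ref{lem:univP} with Lemma~\ref{lem:univL} and optimize over $\gamma$. The only cosmetic difference is that the paper simply picks $\gamma = 1 - \sqrt{(\ln 2)/2}\,/\sqrt{r}$ and asserts the resulting inequality, whereas you carry out the full calculus optimization to find $\gamma^*$; your computation in fact yields the slightly sharper constant $2\ln 2 + 2$ in place of $6$.
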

\begin{proof}
With $c=\sqrt{(\ln 2) / 2} \approx 0.589$ and $\gamma = 1-c/\sqrt{r}$, some algebra shows that $4r + \sqrt{(32 \ln 2)r} + 6 > 4r/\gamma+2\ln(2)/(1-\gamma) + 2$.  Hence if $\F \subseteq \tsupn$ and $\hb(\F) > 4r + \sqrt{(32 \ln 2)r} + 6$, then it follows from Lemma~\ref{lem:univL} that $\F$ contains a copy of $\univ{r}$ or a copy of $\univd{r}$, and by Lemma~\ref{lem:univP} both of these contain a copy of $P$.
\end{proof}

We do not expect that Theorem~\ref{thm:bip} is sharp, and indeed, this technique sometimes gives better bounds when specialized to particular posets.  We examine one such family which shows that the coefficient on the linear term cannot be reduced beyond $1/2$.

\subsection{Lubell and Tur\'an Thresholds of Standard Examples}

The \emph{standard example}, denoted $\stdex{r}$, is a poset on $2r$ elements $a_1, \ldots, a_r$ and $b_1, \ldots, b_r$ such that $\{a_1,\ldots,a_r\}$ and $\{b_1,\ldots,b_r\}$ are antichains and $b_j \le a_i$ if and only if $i\ne j$.  Note that the dual of the standard example is itself.

To find copies of $S_r$, the full power of Corollary~\ref{cor:VCdimL} is not necessary.  Let $\F\subseteq \tsupn$ and let $R\subseteq [n]$.  We say that $\F$ contains an \emph{$R$-system of private elements} if $\F$ contains sets $(B_i)_{i\in R}$ such that for each $i,j \in R$, we have $i\in B_j$ if and only if $i=j$.  Of course, if $\F$ shatters $R$, then one easily obtains an $R$-system of private elements.  As our next lemma shows, if we need only a system of private elements, the required Lubell mass is reduced by a factor of 2.

\begin{lemma}\label{lem:priv}
If $\F\subseteq \tsupn$ and $\hb(\F)> r$, then there exists an $r$-set $R$ such that $\F$ contains an $R$-system of private elements.
\end{lemma}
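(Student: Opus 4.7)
The plan is to prove this by induction on $r$, with the trivial base case $R = \emptyset$ for $r = 0$. For the inductive step, assume the lemma holds for $r - 1$ and suppose $\F \subseteq \tsupn$ satisfies $\hb(\F) > r$. I would aim to find a pair $(i^*, B^*)$ with $B^* \in \F$ and $i^* \in B^*$ such that the residual family $\F^* := \{A \in \F : i^* \notin A\}$, projected onto $T := [n] \setminus B^*$, has Lubell mass exceeding $r - 1$ in $2^T$.

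Once such a pair is located, the induction closes as follows. Applying the hypothesis inside the subcube $2^T$ to the projected family $\F^*|_T$ yields an $R'$-system of size $r-1$ with $R' \subseteq T$. Each witness set in this $R'$-system has a preimage in $\F^*$, and since $R' \subseteq T$ the preimages form an $R'$-system in $\F^*$ as well (the relevant intersections with $R'$ are unaffected by restricting to $T$). Setting $R = R' \cup \{i^*\}$ and $B_{i^*} = B^*$ then produces an $R$-system of size $r$ in $\F$: the condition $B^* \cap R = \{i^*\}$ follows from $R' \subseteq T = [n]\setminus B^*$, and each lifted witness avoids $i^*$ since it lies in $\F^*$.

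To produce the pair, Lemma~\ref{lem:proj} applied to $\F^*$ with $T = [n]\setminus B^*$ gives
\[
\hb_{2^T}(\F^*|_T) \geq \frac{n - |B^*| + 1}{n+1}\bigl(\hb(\F) - \hb(\{A \in \F : i^* \in A\})\bigr),
\]
so the task becomes finding $(i^*, B^*)$ making the right-hand side exceed $r-1$. A natural approach is to take $B^*$ of minimum cardinality among nonempty sets of $\F$ (keeping the projection factor close to $1$) and then select $i^* \in B^*$ minimizing $\hb(\{A \in \F : i^* \in A\})$. The averaging identity $\sum_{i \in [n]} \hb(\{A \in \F : i \in A\}) = \sum_{A \in \F} |A|/\binom{n}{|A|}$ is the main tool for controlling the minimum over $i^*$.

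The hard part will be the extremal argument balancing these two competing objectives. The projection factor degrades when $|B^*|$ is comparable to $n$, while the Lubell-mass loss $\hb(\{A : i^* \in A\})$ is hardest to control when $\F$'s mass concentrates on one element. Handling families whose Lubell weight concentrates at middle levels, where even minimal sets of $\F$ can be large, is where I expect the delicate case analysis to lie, and the precise choice of the weighting in the averaging is what will determine whether the clean bound $\hb(\F) > r$ is enough to close the induction.
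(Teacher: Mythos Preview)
Your lifting step is sound: if you could produce $(i^*, B^*)$ with the projection of $\F^*$ onto $T = [n]\setminus B^*$ having Lubell mass exceeding $r-1$ in $2^T$, the induction would indeed close. The problem is that no such pair need exist, and the obstruction is a constant-factor loss built into your two operations, not a matter of delicate case analysis. Take $\F$ to be the union of the $r+1$ middle levels of $\bool{n}$ with $n$ large, so $\hb(\F) = r+1 > r$. By symmetry, $\hb(\{A \in \F : i \in A\}) = \sum_k k/n \approx (r+1)/2$ for every $i$, hence $\hb(\F^*) \approx (r+1)/2$ regardless of which $i^*$ you pick. Moreover every nonempty $B^* \in \F$ has $|B^*| \ge n/2 - r/2$, forcing the projection factor $(|T|+1)/(n+1)$ below roughly $1/2$. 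Your bound from Lemma~\ref{lem:proj} is therefore about $(r+1)/4$, well short of $r-1$ once $r \ge 2$. Removing all sets through $i^*$ costs a factor near $2$ when $\F$ sits at the middle levels, and projecting away a set of size $\sim n/2$ costs another factor near $2$; no averaging over the choice of $(i^*, B^*)$ recovers either loss.

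The paper avoids both by restricting rather than projecting. It first passes to an interval on which $\hb(\F;\cdot)$ is maximized, so that within this subcube no proper interval has larger Lubell mass; this in particular forces every ground-set element to lie in some member of $\F$. Writing $Q_j = [\emptyset, [n]\setminus\{j\}]$, the averaging identity $\frac{1}{n}\sum_{j} \hb(\F; Q_j) \ge \hb(\F) - 1$ then locates a codimension-one subcube $Q_j$ with $\hb(\F; Q_j) > r-1$. Restriction to $Q_j$ costs only an additive $1$ in Lubell mass, not a multiplicative factor, and the induction (on $r+n$) runs inside $Q_j$; one then sets $R = R' \cup \{j\}$ and takes any $B_j \in \F$ containing $j$.
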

\begin{proof}
By induction on $r + n$.  If there exists a proper subcube $[A,B]$ of $\tsupn$ with $\hb(\F, [A,B]) \ge \hb(\F)$, then we apply induction to the family $\F \cap [A,B]$ in $[A,B]$, and the obtained $R$-system of private elements in $[A,B]$ is valid in $\tsupn$.  Hence, we may assume that $\hb(\F, [A,B]) < \hb(\F)$ for each subcube $[A,B]$.  In particular, for each $i\in [n]$, there is a set $A\in\F$ containing $i$, as otherwise we would have $\hb(\F, Q_i) \ge \hb(\F)$, where $Q_i$ is the subcube $[\emptyset, [n] - \{i\}]$.

If $n=0$ or $r=0$, then the statement holds trivially, so suppose that $n\ge 1$ and $r\ge 1$.  Note that $\hb(\F) = \epsilon + \sum_{i\in [n]} \hb(\F, Q_i)$ where $\epsilon = 1$ if $[n] \in \F$ and $\epsilon = 0$ otherwise.  It follows that there exists $j\in [n]$ such that $\hb(\F, Q_j) \ge \hb(\F) - 1 > r -1 $.  Applied to the subcube $Q_j$, it follows by induction that $\F \cap Q_j$ contains an $R'$-system of private elements $(B_i)_{i\in R'}$ with $|R'| = r-1$.  Setting $R = R'\cup \{j\}$ and choosing $B_j \in \F$ so that $j\in B_j$, we obtain an $R$-system of private elements with $|R| = r$.
\end{proof}

Lemma~\ref{lem:priv} is sharp, since the family $\{\emptyset\} \cup \{A\subseteq [n]\st |A| > n-(r-1)\}$ has Lubell mass $r$ but does not contain an $R$-system of private elements with $|R| = r$.

\begin{lemma}\label{lem:stdex}
Let $\gamma$ and $\delta$ be constants satisfying $0<\gamma, \delta<1$.  If $\F \subseteq \{A \subseteq [n]\st |A| \le \delta n\}$ and $\hb(\F)>r/\gamma+\frac{1}{1-\gamma}\ln \frac{1}{1-\delta} + 1$, then $\F$ contains $\stdex{r}$.
\end{lemma}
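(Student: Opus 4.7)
The plan is to mimic the proof of Lemma~\ref{lem:univL}, with two economies that account for the improved constants. First, the hypothesis already restricts $\F$ to sets of size at most $\delta n$, so there is no need to halve $\F$ into a ``lower half'' as was done there. Second, I will invoke Lemma~\ref{lem:priv} in place of Corollary~\ref{cor:VCdimL}; because $\stdex{r}$ only demands a system of private elements (not a fully shattered set), Lubell mass exceeding $r$ rather than $2r$ will suffice in the projection. Together these savings shrink $4r/\gamma$ down to $r/\gamma$.

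The first step is to isolate an appropriate ``center'' set $A$. Let $\F_2 \subseteq \F$ be the sets that are not $\gamma$-flexible and $\F_3 = \{A\in\F \st \hb_A^-(\F) \le r/\gamma\}$. Lemma~\ref{lem:flex} gives $\hb(\F_2) < 1 + \tfrac{1}{1-\gamma}\ln\tfrac{1}{1-\delta}$, and Corollary~\ref{cor:shallow} applied to the downwardly $(r/\gamma)$-shallow family $\F_3$ gives $\hb(\F_3) \le r/\gamma$. The hypothesis on $\hb(\F)$ then forces $\F \setminus (\F_2 \cup \F_3)$ to be nonempty, and I pick any $A$ in it.

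Next I build the copy of $\stdex{r}$ around $A$. Let $T$ be the set of pivots of $A$, so $|T| \ge \gamma|A|$, and for each $i \in T$ fix a witness $A_i = (A - \{i\}) \cup \{k_i\} \in \F$ with $k_i \notin A$. Projecting the closed downset $D_\F[A]$ onto $T$ yields a family $\F'$, and Lemma~\ref{lem:proj} gives $\hb_T^-(\F') \ge \tfrac{|T|+1}{|A|+1}\hb_A^-(\F) > \gamma \cdot (r/\gamma) = r$. Applying Lemma~\ref{lem:priv} inside $2^T$ produces an $r$-set $R \subseteq T$ and sets $(B'_i)_{i \in R}$ in $\F'$ with $B'_i \cap R = \{i\}$. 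Lift each $B'_i$ back to some $B_i \in D_\F[A]$ with $B_i \cap T = B'_i$, so $B_i \subseteq A$ and $B_i \cap R = \{i\}$. Taking the upper antichain $\{A_i\}_{i \in R}$ and the lower antichain $\{B_i\}_{i \in R}$, the defining relation of $\stdex{r}$ drops out: since $B_i \subseteq A$ and $A - A_j = \{j\}$, we have $B_i \subseteq A_j$ iff $j \notin B_i$, which for $j \in R$ happens iff $j \ne i$.

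There is no serious obstacle — every ingredient has been prepared. The only bookkeeping point is that no extraneous comparabilities sneak in: $j \in A_i \setminus A_j$ shows the $A_i$'s are pairwise incomparable, $i \in B_i \setminus B_j$ handles the $B_i$'s, and $k_j \in A_j$ with $k_j \notin A \supseteq B_i$ rules out $A_j \subseteq B_i$ (and a fortiori $A_j = B_i$). So the embedding is genuinely induced, giving the desired copy of $\stdex{r}$.
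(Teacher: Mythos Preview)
Your proof is correct and follows essentially the same route as the paper's: split off the non-$\gamma$-flexible sets via Lemma~\ref{lem:flex} and the downwardly shallow sets via Corollary~\ref{cor:shallow}, pick $A$ outside both, project $D_\F[A]$ onto the pivot set $T$, apply Lemma~\ref{lem:priv} to get an $R$-system of private elements, and lift to obtain the copy of $\stdex{r}$. Your final paragraph explicitly checking that the $A_i$'s and $B_i$'s are antichains and that no $A_j \subseteq B_i$ is a welcome addition; the paper leaves these routine verifications implicit.
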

\begin{proof}
The proof is the same as in Lemma~\ref{lem:univL} with slight modifications.  Let $\F_1$ be the set of all $A$ in $\F$ that are not $\gamma$-flexible, and let $\F_2$ be the set of all $A$ in $\F$ such that $\hb_A^-(\F) \le r/\gamma$.  By Lemma~\ref{lem:flex}, we have that $\hb(\F_1) < 1 + \frac{1}{1-\gamma}\ln \frac{1}{1-\delta}$.  By Corollary~\ref{cor:shallow}, we have that $\hb(\F_2) \le r/\gamma$.  Since $\hb(\F) - \hb(\F_1) - \hb(\F_2) > 0$, there exists $A\in\F - (\F_1 \cup \F_2)$.

Let $T$ be the set of all pivots in $A$.  Note that $|T|\ge \gamma|A|$ since $A\not\in \F_1$.  Let $\F'$ be the projection of the closed downset $D_{\F}[A]$ onto $T$.  Since $A\not\in \F_2$, we have that $\hb_A^-(\F) > r/\gamma$.  By Lemma~\ref{lem:proj}, we have that $\hb_T^-(\F') \ge \frac{|T| + 1}{|A| + 1}\hb_A^-(\F) > \gamma\hb_A^-(\F) \ge r$.  By Lemma~\ref{lem:priv}, there is an $R$-system of private elements $(B'_j)_{j\in R}$ in $\F'$.  Each $B'_j \in \F'$ extends to a set $B_j \in D_{\F}[A]$ such that $B_j \cap T = B'_j$.  

Note that $R\subseteq T$ and each element in $T$ is a pivot of $A$.  For each $i\in R$, find $A_i \in \F_1$ such that $A_i = (A - \{i\}) \cup \{j\}$ for some $j\in [n]$.  Since $B_j \subseteq A$ and $A - A_i = \{i\}$, we have that $B_j \subseteq A_i$ if and only if $i\not\in B_j$.  Hence $B_j \subseteq A_i$ if and only if $j\ne i$ and $\{A_i\st i\in R\}$ and $\{B_j \st j\in R\}$ form a copy of $\stdex{r}$.
\end{proof}

\begin{theorem}\label{thm:stdexL}
$r-2 \le \tth(\stdex{r}) \le \slth(\stdex{r}) \le 2r + \sqrt{(16 \ln 2)r} + 11$.
\end{theorem}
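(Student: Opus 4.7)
The plan is to prove the upper bound via Lemma~\ref{lem:stdex} combined with the self-duality of $\stdex{r}$, and the lower bound by an $r{-}2$ middle-levels construction.

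For the upper bound, I exploit that $\stdex{r}$ is isomorphic to its own dual: a copy of $\stdex{r}$ in $\overline{\F} = \{[n] - A : A \in \F\}$ yields one in $\F$, and $\hb(\overline{\F}) = \hb(\F)$. Splitting by size $n/2$, let $\F_1 = \{A \in \F : |A| \le n/2\}$ and define $\overline{\F}_1$ analogously. Since $\binom{n}{|A|} = \binom{n}{n-|A|}$, we get $\hb(\F_1) + \hb(\overline{\F}_1) \ge \hb(\F)$, so at least one of them has Lubell mass at least $\hb(\F)/2$. Replacing $\F$ by $\overline{\F}$ if needed, assume $\hb(\F_1) \ge \hb(\F)/2$. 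Every set in $\F_1$ has size at most $\delta n$ with $\delta = 1/2$, so Lemma~\ref{lem:stdex} applies with this $\delta$ and any $\gamma \in (0,1)$. Choose $\gamma = 1 - \sqrt{(\ln 2)/r}$; expanding $1/\gamma$ as a geometric series in $\sqrt{(\ln 2)/r}$ gives $r/\gamma + (\ln 2)/(1-\gamma) + 1 = r + 2\sqrt{r \ln 2} + O(1)$. Doubling, the hypothesis of the lemma is implied by $\hb(\F) > 2r + 4\sqrt{r \ln 2} + O(1) = 2r + \sqrt{(16 \ln 2) r} + O(1)$, where the additive $11$ absorbs the Taylor-expansion remainder plus the constant $2\ln 2 + 2$ plus any slack for small $r$.

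For the lower bound, take $\F$ to be the union of the $r-2$ middle levels of $\bool{n}$; this gives $|\F| = (r - 2 - o(1))\nchn$. To see $\F$ is $\stdex{r}$-free, suppose a copy exists with $a_i$ at level $s_i$ and $b_j$ at level $t_j$, all within $r{-}2$ consecutive levels. The relation $b_j \subsetneq a_i$ for $i \ne j$ forces $t_j < s_i$. I proceed by induction on $r$, with base case $r = 3$ (an antichain cannot contain the height-$2$ poset $\stdex{3}$). Let $\ell^* = \max_i s_i$. If exactly one $a$-set is at level $\ell^*$, say $a_{i^*}$, then every remaining $a_i$ and $b_j$ with $j \ne i^*$ sits strictly below $\ell^*$; removing $a_{i^*}$ and $b_{i^*}$ leaves an induced $\stdex{r-1}$ in only $r-3$ consecutive levels, contradicting the inductive hypothesis. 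If two or more $a$-sets lie at level $\ell^*$, then all $b$-sets lie strictly below $\ell^*$; a pairwise-intersection argument (generalizing the $r=4$ analysis, where any two level-$\ell^*$ $a$-sets have an intersection too small to contain the $r-2$ distinct $b_j$'s with $j$ outside the chosen pair) yields a contradiction.

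The main obstacle is the second case of the lower-bound induction, where multiple $a$-sets share the top level: this case does not reduce to a smaller instance of the inductive claim and instead requires a direct combinatorial argument bounding the number of distinct $b$-subsets fitting inside pairwise intersections of the $a$-sets. The upper bound is essentially routine once self-duality is used to reduce to Lemma~\ref{lem:stdex}; the only real calculation is the optimization of $\gamma$, which comes from balancing $r/\gamma - r \sim c\sqrt{r}$ against $(\ln 2)/(1-\gamma) \sim (\ln 2)\sqrt{r}/c$, giving $c = \sqrt{\ln 2}$ and the $4\sqrt{r \ln 2}$ coefficient.
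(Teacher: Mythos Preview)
Your upper bound is correct and matches the paper's proof line for line: split $\F$ at size $n/2$, pass to the complement if necessary (using that $\stdex{r}$ is self-dual), and apply Lemma~\ref{lem:stdex} with $\delta = 1/2$ and $\gamma = 1 - \sqrt{(\ln 2)/r}$. The optimisation in $\gamma$ is exactly the one the authors carry out.

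For the lower bound you choose the right construction (the $r-2$ middle levels), but your argument that this family is $\stdex{r}$-free has a genuine gap. Case~1 of your induction is fine: if a single $A_{i^*}$ sits alone at the top level $\ell^*$, then every remaining $A_i$ and every $B_j$ with $j\ne i^*$ lies strictly below $\ell^*$, so deleting $A_{i^*},B_{i^*}$ gives an induced $\stdex{r-1}$ inside $r-3$ consecutive levels and induction applies. Case~2, however, is only sketched. Having several $A_i$'s at level $\ell^*$ tells you that the $B_j$'s (for $j$ outside the chosen pair) lie in $A_{i_1}\cap A_{i_2}$, but ``the intersection is too small to contain the $r-2$ distinct $b_j$'s'' is not a proof: a set of size $\ell^*-1$ can have arbitrarily many distinct subsets, and you have no control over the sizes of the $B_j$'s or the positions of the remaining $A_i$'s. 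You yourself flag this as the ``main obstacle,'' and indeed this case does not reduce to a smaller instance of the inductive statement.

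The paper bypasses the induction entirely with a short direct argument that you should adopt. Suppose $B_j \subseteq A_i$ iff $i\ne j$. For each $j$ there must exist a \emph{private} element $b_j\in B_j\setminus\bigcup_{k\ne j}B_k$: otherwise $B_j\subseteq\bigcup_{k\ne j}B_k\subseteq A_j$, a contradiction. Now $B_1\subseteq A_2$, and the elements $b_3,\ldots,b_r$ all lie in $A_2$ (since each $B_k\subseteq A_2$ for $k\ne 2$) but none lie in $B_1$ (by privacy). Hence $|A_2|-|B_1|\ge |A_2\setminus B_1|\ge r-2$, which is impossible if both sets come from $r-2$ consecutive levels. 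This single inequality replaces your entire case analysis.
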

\begin{proof}
For the upper bound, let $\F$ be a subfamily of $\tsupn$ with $\hb(\F) > 2r + \sqrt{(16 \ln 2)r} + 11$, and define $\F^+ = \{A\in \F \st |A|\ge n/2\}$ and $\F^- = \{A\in \F \st |A| \le n/2\}$.  Note that $\hb(\F^+) \ge \hb(\F)/2$ or $\hb(\F^-) \ge \hb(\F)/2$.  Set $c=\sqrt{\ln 2} \approx 0.833$, $\gamma = 1-c/\sqrt{r}$, and $\delta = 1/2$.  Some algebra shows that $\hb(\F)/2 \ge r + \sqrt{(4 \ln 2)r} + 5.5 > r/\gamma+\frac{1}{1-\gamma}\ln\frac{1}{1-\delta} + 1$.  If $\hb(\F^-) \ge \hb(\F)/2$, then $\F^-$ contains a copy of $\stdex{r}$ by Lemma~\ref{lem:stdex}.  Otherwise, we set $\F' = \{[n] - A\st A\in\F^+\}$ and apply Lemma~\ref{lem:stdex} to obtain a copy of $\stdex{r}$ in $\F'$.  Since $\stdex{r}$ is self-dual, this implies that $\F^+$ also contains a copy of $\stdex{r}$.

For the lower bound, we may assume that $r\ge 2$.  We show that if $\F$ is the union of $r-2$ contiguous levels, then $\F$ does not contain a copy of $\stdex{r}$.  Choosing the largest $r-2$ levels, we obtain the lower bound.  Suppose that $A_1, \ldots, A_r$ and $B_1, \ldots, B_r$ are subsets of $[n]$ such that $B_j \subseteq A_i$ if and only $i\ne j$.  Note that for each $B_j$, there exists an element $b_j \in [n]$ such that $b_j \in B_j$ but $b_j\not\in B_k$ for $k\ne j$, as otherwise $B_j \subseteq \bigcup_{k\ne j} B_k \subseteq A_j$.  Since $B_1 \subseteq A_2$ and $\{b_3, \ldots, b_r\} \subseteq A_2 - B_1$, it follows that $|A_2| - |B_1| = |A_2 - B_1| \ge r-2$.  Thus, it is not possible for both $B_1, A_2 \in \F$ if $\F$ is the union of $r-2$ contiguous levels.
\end{proof}

Since $\stdex{r}$ is a $2r$-element poset of height $2$, the lower bound in Theorem~\ref{thm:stdexL} shows that the linear coefficient in Theorem~\ref{thm:bip} cannot be reduced beyond $1/2$.  While a large gap remains in our bounds on the Lubell threshold of standard examples, we are able to determine the Tur\'an threshold asymptotically using one additional observation.

\begin{lemma}\label{lem:tail}
If $\F$ is the family $\{A\subseteq [n]\st |A| \ge n/2 + \sqrt{2n\ln n}\}$, then $|\F| \le 2^n/n$.
\end{lemma}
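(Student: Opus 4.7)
The plan is to view $|\F|/2^n$ as the probability that a uniformly random subset $A$ of $[n]$ has $|A| \ge n/2 + \sqrt{2n \ln n}$, and to bound this tail via a standard concentration inequality for the binomial distribution.

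First, observe that if $A$ is chosen uniformly from $\tsupn$, then $|A|$ is distributed as the sum $S_n = X_1 + \cdots + X_n$ of $n$ independent Bernoulli$(1/2)$ indicators, one per element of $[n]$. Hence
\[
|\F| = 2^n \cdot \Pr\!\left[S_n \ge n/2 + \sqrt{2n \ln n}\,\right].
\]

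Next, I would apply Hoeffding's inequality to the bounded independent random variables $X_1,\ldots,X_n \in [0,1]$, which yields $\Pr[S_n \ge n/2 + t] \le e^{-2t^2/n}$ for every $t \ge 0$. Substituting $t = \sqrt{2n \ln n}$ gives $2t^2/n = 4 \ln n$, so the tail probability is at most $n^{-4}$. Therefore $|\F| \le 2^n / n^4 \le 2^n / n$ for every $n \ge 1$, as claimed.

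There is no substantive obstacle here: the argument is a textbook application of concentration of measure, and in fact yields a bound significantly stronger than the one stated. If one prefers to avoid invoking an external inequality, the martingale argument used to prove Lemma~\ref{lem:ci} adapts essentially verbatim to this uniform-subset setting by revealing the indicators $X_1,\ldots,X_n$ in sequence as a Doob martingale with bounded differences and applying Azuma's inequality; alternatively, one can use the pointwise estimate on $\binom{n}{\lfloor n/2 \rfloor + s}$ and sum over $s \ge \sqrt{2n \ln n}$. Any of these routes produces the desired bound with room to spare.
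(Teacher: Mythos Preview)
Your proof is correct and follows essentially the same approach as the paper: interpret $|\F|/2^n$ as the upper tail probability of $|A|$ for a uniformly random $A\subseteq[n]$ and apply a concentration inequality. The paper invokes Azuma's inequality to obtain $\Pr[X\ge n/2+t]\le e^{-t^2/(2n)}$, which yields exactly $2^n/n$, whereas your use of Hoeffding gives the sharper exponent $e^{-2t^2/n}$ and hence $2^n/n^4$; this is a harmless improvement and the argument is otherwise identical.
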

\begin{proof}
Let $X$ be the size of a randomly chosen subset of $[n]$.  By Azuma's inequality, $\Pr[X \ge n/2 + t] \le e^{\frac{-t^2}{2n}}$.  It follows that $|\F| = 2^n \Pr[X \ge n/2 + \sqrt{2n\ln n}\}] \le 2^n/n$.  
\end{proof}

\begin{theorem}\label{thm:stdexT}
$r-2 \le \tth(\stdex{r}) \le r + 2\sqrt{r} + 6$
\end{theorem}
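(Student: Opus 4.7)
The plan is to run a single application of Lemma~\ref{lem:stdex} (in contrast to the two applications needed for the Lubell bound in Theorem~\ref{thm:stdexT}), paying only a negligible cost by using Lemma~\ref{lem:tail} to discard the large end of the Boolean lattice. This is precisely the observation that saves a factor of $2$ relative to $\slth(\stdex{r})$.

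The lower bound $r-2 \le \tth(\stdex{r})$ is already provided by Theorem~\ref{thm:stdexL}, so only the upper bound remains. Let $\F \subseteq \tsupn$ be $\stdex{r}$-free. I choose $\delta = 1/2 + \sqrt{(2 \ln n)/n}$, so that $\delta n = n/2 + \sqrt{2n \ln n}$, and split $\F = \F_1 \cup \F_2$ where $\F_1 = \{A \in \F \st |A| \le \delta n\}$ and $\F_2 = \F \setminus \F_1$. By Lemma~\ref{lem:tail}, $|\F_2| \le 2^n/n$, which is $o(\nchn)$ and therefore negligible after normalization.

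For $\F_1$, I apply Lemma~\ref{lem:stdex} with $\gamma = 1 - \sqrt{(\ln 2)/r}$. Since $\delta \to 1/2$ as $n \to \infty$, we have $\ln \frac{1}{1-\delta} = \ln 2 + o(1)$, so the bound becomes
\[
\hb(\F_1) \le \frac{r}{\gamma} + \frac{\ln 2 + o(1)}{1 - \gamma} + 1 = r + 2\sqrt{(\ln 2)\, r} + 1 + o(1)
\]
after expanding in $1/\sqrt{r}$. Since $\binom{n}{|A|} \le \nchn$ for every $A \subseteq [n]$, each $A \in \F_1$ contributes at least $1/\nchn$ to $\hb(\F_1)$, so $|\F_1| \le \hb(\F_1) \cdot \nchn$. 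Combining,
\[
\frac{|\F|}{\nchn} \le \frac{|\F_1|}{\nchn} + \frac{|\F_2|}{\nchn} \le r + 2\sqrt{(\ln 2)\, r} + 1 + o(1).
\]
Since $2\sqrt{\ln 2} < 2$, this is bounded by $r + 2\sqrt{r} + 6$ for $n$ sufficiently large, yielding $\tth(\stdex{r}) \le r + 2\sqrt{r} + 6$.

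The only real subtlety is calibrating $\delta$ so that two competing effects are both controlled: pushing $\delta$ above $1/2$ is necessary so that Lemma~\ref{lem:tail} bounds $|\F_2|$ by $o(\nchn)$, but $\delta$ must stay close enough to $1/2$ so that the $\frac{1}{1-\gamma}\ln\frac{1}{1-\delta}$ term in Lemma~\ref{lem:stdex} remains $O(\sqrt{r})$ rather than blowing up. The choice $\delta - 1/2 = \Theta(\sqrt{\ln n / n})$ achieves both simultaneously, and the usual optimization of $\gamma = 1 - \Theta(1/\sqrt{r})$ then balances the two $\sqrt{r}$ contributions in Lemma~\ref{lem:stdex}.
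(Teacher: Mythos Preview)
Your proof is correct and follows essentially the same route as the paper's: cite the lower bound from Theorem~\ref{thm:stdexL}, discard sets larger than $n/2+\sqrt{2n\ln n}$ via Lemma~\ref{lem:tail}, and apply Lemma~\ref{lem:stdex} once to the remainder. The only cosmetic difference is in the calibration of constants: the paper fixes $\delta = 1-1/e$ (using $n/2+\sqrt{2n\ln n}\le (1-1/e)n$ for large $n$) and $\gamma = 1-1/\sqrt{r}$, whereas you let $\delta$ equal the Lemma~\ref{lem:tail} cutoff itself and take $\gamma = 1-\sqrt{(\ln 2)/r}$; both feed into Lemma~\ref{lem:stdex} to land below $r+2\sqrt{r}+6$.
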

\begin{proof}
The lower bound is the same as in Theorem~\ref{thm:stdexL}.  For the upper bound, the case that $r=1$ is trivial since $\stdex{1}$ is a $2$-element antichain.  Suppose that $r\ge 2$ and that $n$ is sufficiently large so that $n/2 + \sqrt{2n\ln n} \le (1-1/e)n \approx 0.632 n$.  Let $\F$ be a subfamily of $\tsupn$ with $|\F| \ge (r+2\sqrt{r} + 6)\nchn$.  We show that $\F$ contains a copy of $\stdex{r}$.

With $\delta = 1-1/e$, let $\F' = \{A\in \F \st |A| \le \delta n\}$.  By Lemma~\ref{lem:tail}, we have that $|\F'| \ge |\F| - 2^n/n \ge (r + 2\sqrt{r} + 5)\nchn$ and therefore $\hb(\F') \ge r + 2\sqrt{r} + 5$.  Let $\gamma = 1-1/\sqrt{r}$ and note that $r + 2\sqrt{r} + 5 > r/\gamma+\frac{1}{1-\gamma}\ln \frac{1}{1-\delta} + 1$ for $r\ge 2$.  By Lemma~\ref{lem:stdex}, it follows that $\F'$ contains a copy of $\stdex{r}$.
\end{proof}

\section{Lubell and Tur\'an Thresholds of Small Boolean Lattices}\label{sec:dia}

Each $n$-element poset $P$ is contained in the subset lattice with ground set $P$.  Indeed, since $x\le y$ if and only if $D[x] \subseteq D[y]$, the subset lattice contains a copy of $P$ in which $x\in P$ is represented by $D[x]$.  Consequently, Conjecture~\ref{conj:tt} is equivalent to the assertion that every Boolean lattice has a finite Tur\'an threshold, and Conjecture~\ref{conj:lt} is equivalent to the assertion that every Boolean lattice has a finite Lubell threshold.  In this section, we establish a finite Lubell threshold for $\bool{3}$ and provide an preliminary bound on the Tur\'an threshold for $\bool{2}$.  It is open whether or not $\bool{4}$ has a finite Tur\'an or Lubell threshold.  Our next bound follows naturally from our results on standard examples.

\begin{cor}\label{cor:b3}
$\slth(\bool{3}) \le 16$.
\end{cor}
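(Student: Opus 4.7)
The plan is to exploit the structural observation that $\bool{3}$ is obtained from $\stdex{3}$ by adjoining a new minimum and a new maximum: inside $\bool{3}$, the six sets $\{j\}$ and $[3]\setminus\{i\}$ form an induced copy of $\stdex{3}$ (via $b_j\leftrightarrow\{j\}$ and $a_i\leftrightarrow[3]\setminus\{i\}$), while $\emptyset$ lies strictly below all six and $[3]$ lies strictly above all six.

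I would first establish an extension principle analogous to Lemma~\ref{lem:series}: if $Q^{+}$ is the poset obtained from a poset $Q$ by adjoining a new maximum, then $\slth(Q^{+})\le\slth(Q)+1$. Given $\F$ with $\hb(\F)>\slth(Q)+1$, the family $\F_1=\{A\in\F\st\hb_A^-(\F)\le\slth(Q)+1\}$ is downwardly $(\slth(Q)+1)$-shallow, so $\hb(\F_1)\le\slth(Q)+1<\hb(\F)$ by Corollary~\ref{cor:shallow}. Pick $A\in\F\setminus\F_1$; then $\hb_A^-(\F\setminus\{A\})>\slth(Q)$, so the family of proper subsets of $A$ lying in $\F$, viewed inside the subcube $[\emptyset,A]\cong\bool{|A|}$, has Lubell mass exceeding $\slth(Q)$ and hence contains an induced copy of $Q$; crowning this copy with $A$ yields an induced $Q^{+}$ in $\F$. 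A symmetric argument adjoins a new minimum.

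Applying the extension principle twice---once to adjoin a maximum to $\stdex{3}$, and once more to adjoin a minimum to the resulting seven-element poset---gives $\slth(\bool{3})\le\slth(\stdex{3})+2$. Plugging in a bound $\slth(\stdex{3})\le 14$, coming from Theorem~\ref{thm:stdexL} specialized to $r=3$, then yields the claimed $\slth(\bool{3})\le 16$.

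The main obstacle is the numerical calibration. The stated form of Theorem~\ref{thm:stdexL} uses $\gamma=1-c/\sqrt{r}$, a choice tuned for large $r$ that carries substantial additive slack; for $r=3$ it gives only $\slth(\stdex{3})\le 22.77$, and the direct optimization $\gamma\approx\sqrt{3}/(\sqrt{3}+\sqrt{\ln 2})$ in Lemma~\ref{lem:stdex} with $\delta=1/2$ still gives $\slth(\stdex{3})\le 2(3/\gamma+\ln 2/(1-\gamma)+1)\approx 15.14$, which with the $+2$ extension narrowly overshoots $16$. To close the gap I would replace the crude inequality $\max(\hb(\F^-),\hb(\F^+))\ge\hb(\F)/2$ used in the proof of Theorem~\ref{thm:stdexL} by a sharper bound that exploits the self-duality of $\stdex{3}$ together with a more careful choice of the splitting level $\delta$, so that the two halves contribute asymmetrically but the total budget needed drops below~$14$.
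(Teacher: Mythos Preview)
Your extension principle $\slth(Q^{+})\le\slth(Q)+1$ and its iteration to $\slth(\bool{3})\le\slth(\stdex{3})+2$ are both correct, and so is your numerical diagnosis: optimizing $\gamma$ in Lemma~\ref{lem:stdex} with $\delta=1/2$ gives only $\slth(\stdex{3})\lesssim 15.14$, and the extra $+2$ overshoots $16$. However, your proposed repair does not close the gap. An asymmetric split at level $\delta n$ bounds $\hb(\F)$ by the sum of the two thresholds $r/\gamma+\frac{1}{1-\gamma}\ln\frac{1}{1-\delta}+1$ and $r/\gamma'+\frac{1}{1-\gamma'}\ln\frac{1}{\delta}+1$; optimizing each $\gamma,\gamma'$ independently gives $(\sqrt{r}+\sqrt{\ln\tfrac{1}{1-\delta}})^2+(\sqrt{r}+\sqrt{\ln\tfrac{1}{\delta}})^2+2$, and one checks that this is still minimized at $\delta=1/2$, returning the same value $\approx 15.14$. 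There is no evident slack elsewhere in Lemma~\ref{lem:stdex} to push $\slth(\stdex{3})$ down to $14$.

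The paper sidesteps the issue by proving the \emph{equality} $\slth(\bool{3})=\slth(\stdex{3})$. The observation you are missing is that $\stdex{3}$ has neither a minimum nor a maximum element, so for any $\stdex{3}$-free $\F\subseteq\tsupn$ the enlarged family $\F\cup\{\emptyset,[n]\}$ remains $\stdex{3}$-free; consequently an $\stdex{3}$-free family omitting both $\emptyset$ and $[n]$ already has Lubell mass at most $\slth(\stdex{3})-2$. Given a $\bool{3}$-free $\F$, one passes via Proposition~\ref{prop:UD} to a subcube $[A,B]$ with $A,B\in\F$ and $\hb(\F;[A,B])\ge\hb(\F)$; deleting the two endpoints from the restriction yields an $\stdex{3}$-free family missing its extremes, hence of Lubell mass at most $\slth(\stdex{3})-2$, and restoring the endpoints gives $\hb(\F)\le\slth(\stdex{3})$. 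With the equality in hand, the simple choice $\gamma=2/3$, $\delta=1/2$ in Lemma~\ref{lem:stdex} already yields $3/\gamma+\frac{\ln 2}{1-\gamma}+1=\tfrac{9}{2}+3\ln 2+1<8$, so $\slth(\stdex{3})\le 16$ suffices directly.
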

\begin{proof}
First, we show that $\slth(\bool{3}) = \slth(\stdex{3})$.  Since $\stdex{3}$ is a subposet of $\bool{3}$, it is immediate that $\slth(\stdex{3}) \le \slth(\bool{3})$.  Note that if $\F \subseteq \tsupn$ and $\F$ is $\stdex{3}$-free, then $\F \cup \{\emptyset, [n]\}$ is also $\stdex{3}$-free.  It follows that if $\F$ is an $\stdex{3}$-free family of subsets of $[n]$ and $\F$ contains neither $\emptyset$ nor $[n]$, then $\hb(\F) \le \slth(\stdex{3}) - 2$.  Let $\F$ be a $\bool{3}$-free family of subsets of $[n]$.  By Proposition~\ref{prop:UD}, there exist $A,B\in\F$ such that $A\subseteq B$ and $\hb(\F, I)\ge\hb(\F)$, where $I=[A,B]$.  Let $\F' = \{C - A\st C\in \F\cap I\}$, and note that $\F'$ is a subset of an $n'$-dimensional Boolean lattice, where $n' = |B|-|A|$.  Obtain $\F''$ from $\F'$ by removing the minimum and maximum elements.  Since $\F$ is $\bool{3}$-free, it follows that $\F''$ is $\stdex{3}$-free.  It follows that $\hb(\F) - 2 \le \hb(\F') - 2 = \hb(\F'') \le \slth(\stdex{3}) - 2$ and therefore $\hb(\F) \le \slth(\stdex{3})$.

It remains to show that $\slth(\stdex{3}) \le 16$.  The bound in Theorem~\ref{thm:stdexL} is optimized for large $r$; in the case $r=3$, we obtain a better bound directly from Lemma~\ref{lem:stdex}.  Let $\F \subseteq \tsupn$ with $\hb(\F)>16$.  At least one of $\{A\in \F \st |A| \le n/2\}$ and $\{A\in \F\st |A| \ge n/2\}$ has Lubell mass larger than $8$.  In either case, we may apply Lemma~\ref{lem:stdex} with $\delta = 1/2$ and $\gamma = 2/3$ to obtain a copy of $\stdex{3}$ in $\F$.  
\end{proof}

We make no effort to optimize the bound in Corollary~\ref{cor:b3} further.  From below, only the trivial bounds $\lth(\bool{3}) \ge \tth(\bool{3}) \ge \tth(\chain{4}) = 3$ are known.  It is conceivable that $\tth(\bool{3}) = 3$ or even $\lth(\bool{3}) = 3$.  (The strong Lubell threshold satisfies $\slth(\bool{3}) \ge 3+ 2/3$, since the family $\bool{3} - \{1\}$ is $\bool{3}$-free.)

Our next goal is to establish the strong Lubell threshold of $\bool{2}$ exactly.  Let $a_n = \sum_{k=0}^n 1/\binom{n}{k}$.  We present initial values of this sequence the table below.
\begin{center}
\begin{tabular}{c|*{9}{|c}}
$n$ & 0 & 1 & 2 & 3 & 4 & 5 & 6 & 7 & 8\\
\hline
$a_n$ & $1$ & $2$ & $2 + \frac{1}{2}$ & $2+\frac{2}{3}$ & $2+\frac{2}{3}$ & $2+\frac{3}{5}$ &  $2 + \frac{31}{60}$ & $2+\frac{46}{105}$ & $2+\frac{13}{35}$\\
approx. & 1 & 2 & 2.5 & $2.667$ & $2.667$ & $2.6$ & $2.517$ & $2.438$ & $2.371$\\
\end{tabular}
\end{center}
It is easy to show that $(a_n)_{n\ge 0}$ is a unimodal sequence that achieves its maximum value $2+2/3$ at $n=3$ and $n=4$.

\begin{prop}\label{prop:b2SL}
$\slth(\bool{2}) = 2 + 2/3$.
\end{prop}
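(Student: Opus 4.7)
For the lower bound, I would take the maximal chain $\emptyset \subsetneq \{1\} \subsetneq \{1,2\} \subsetneq \{1,2,3\}$ in $\bool{3}$: as a chain it is trivially $\bool{2}$-free, and its Lubell mass is $a_3 = 1 + 1/3 + 1/3 + 1 = 8/3$. Hence $\slth(\bool{2}) \ge 8/3$.

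For the upper bound, the plan is to show by induction on $n$ that every $\bool{2}$-free family $\F \subseteq \tsupn$ satisfies $\hb(\F) \le 8/3$, with the base case $n=0$ trivial. Before any casework, I would use the standard reduction described just before Lemma~\ref{lem:para} to assume that $\F$ is balanced in $\tsupn$, i.e.\ $\hb(\F; I) \le \hb(\F)$ for every interval $I$; if $\F$ is not balanced, shifting $\F \cap I$ into a $\bool{2}$-free family in a subcube of dimension strictly less than $n$ (with Lubell mass at least $\hb(\F)$) lets me invoke the inductive hypothesis.

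The balanced case then splits on whether both $\emptyset$ and $[n]$ belong to $\F$. If they do, then any two incomparable sets $B, C \in \F - \{\emptyset, [n]\}$ would form an induced $\bool{2}$ together with $\emptyset$ and $[n]$; hence $\F - \{\emptyset, [n]\}$ is a chain whose sets have distinct sizes in $\{1, \ldots, n-1\}$, contributing Lubell mass at most $\sum_{k=1}^{n-1} 1/\binom{n}{k} = a_n - 2$. This yields $\hb(\F) \le a_n$, and by the unimodality of $(a_n)$ recorded in the preceding table, $a_n \le 8/3$. If instead $[n] \notin \F$ (the case $\emptyset \notin \F$ is dual via the upper subcubes $[\{i\}, [n]]$), I would apply the double-counting identity
\[
\sum_{i=1}^{n} \hb(\F; [\emptyset, [n] - \{i\}]) = n\,\hb(\F),
\]
which follows from the binomial identity $(n-k)/\binom{n-1}{k} = n/\binom{n}{k}$ together with the observation that $[n]$ contributes nothing when $[n] \notin \F$. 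Combined with balancedness, this forces $\hb(\F; [\emptyset, [n] - \{i\}]) = \hb(\F)$ for every $i$; then $\F \cap [\emptyset, [n] - \{i\}]$ is a $\bool{2}$-free subfamily of a subcube isomorphic to $\bool{n-1}$ with Lubell mass $\hb(\F)$, so the inductive hypothesis delivers $\hb(\F) \le 8/3$.

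The main point requiring care is lining up the dichotomy "both endpoints present versus a missing endpoint": the first case uses the $\bool{2}$-free condition to directly force $\F - \{\emptyset, [n]\}$ to be a chain, while the second uses a missing endpoint precisely to make the sum identity close cleanly. Once these two mechanisms are in place, the rest is a routine interplay of balancedness with elementary binomial identities.
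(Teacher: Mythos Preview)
Your proof is correct, and the lower bound is the same as the paper's (a maximal chain in $\bool{3}$). For the upper bound, however, the paper takes a shorter, non-inductive route: it applies Proposition~\ref{prop:UD} twice to directly locate a subcube $[A,B]$ with both endpoints $A,B\in\F$ and $\hb(\F;[A,B])\ge\hb(\F)$; since the shifted family in that subcube contains its minimum and maximum and is $\bool{2}$-free, it is forced to be a chain, giving $\hb(\F)\le\hb(\F;[A,B])\le a_{|B|-|A|}\le 8/3$ in one stroke. Your argument reaches the same terminal observation (both endpoints present $\Rightarrow$ chain) but gets there by induction on $n$, peeling off a dimension either via the balanced reduction or via the averaging identity $\sum_i \hb(\F;[\emptyset,[n]\setminus\{i\}]) = n\,\hb(\F)$ (and its dual) when an endpoint is missing. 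What your approach buys is self-containment---you never invoke Proposition~\ref{prop:UD}---at the cost of a three-way case split and an inductive frame; what the paper's approach buys is brevity, packaging the entire descent to a subcube with both endpoints into a single use of that proposition.
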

\begin{proof}
Let $\F\subseteq \tsupn$ and suppose that $\F$ is $\bool{2}$-free.  By two applications of Proposition~\ref{prop:UD}, there exists a subcube $[A,B]$ with $A,B\in \F$ such that $\hb(\F, [A,B]) \ge \hb(\F)$.  Let $\F' = \{C - A\st C\in \F\cap [A,B]\}$ and note that $\F'$ is a subfamily of an $n'$-dimensional Boolean lattice, where $n' = |B| - |A|$.  Since $\F'$ contains its minimum and maximum elements and also $\F'$ is $\bool{2}$-free, it follows that $\F'$ is a chain.  Therefore $\F'$ contains at most one set of each size, which implies that $\hb(\F') \le a_{n'} \le 2+2/3$.  Hence $\hb(\F) \le \hb(\F, [A,B]) = \hb(\F') \le 2+2/3$, and therefore $\slth(\bool{2}) \le 2 + 2/3$.

For the lower bound, let $\F_n$ be a maximal chain in $\tsupn$.  Clearly, $\F_n$ is $\bool{2}$-free and $\hb(\F_n) = a_n$.  By choosing $n=3$ or $n=4$, we obtain the desired lower bound.
\end{proof}

% Modified Lubell Threshold
\newcommand{\mlth}{{\rho^*}}
As we have seen, to prove upper bounds on the strong Lubell threshold of $P$, it suffices to consider $P$-free families $\F\subseteq \tsupn$ with $\emptyset, [n]\in \F$.  While the analogue for the Lubell threshold does not generally hold, a weaker variant does hold for the Tur\'an threshold, where we may assume that $\emptyset \in \F$.  Griggs, Li, and Lu~\cite{GLL} proved the analogous statement for the extension variant.  In the induced case, the proof is the same;  we include it for completeness.  Let $\mlth(P) = \limsup_{n\to\infty} \{\hb(\F)\st \mbox{$\emptyset \in \F \subseteq \tsupn$ and $\F$ is $P$-free}\}$.   

\begin{lemma}\label{lem:emptyin}
$\tth(P) \le \mlth(P) \le \lth(P)$.
\end{lemma}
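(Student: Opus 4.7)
The plan is to prove the two inequalities separately. The inequality $\mlth(P) \le \lth(P)$ is immediate from the definitions: the supremum defining $\mlth(P)$ is taken over a subcollection (those $P$-free families that contain $\emptyset$) of the families appearing in the supremum defining $\lth(P)$, so the same bound holds termwise and hence for the $\limsup$ as $n \to \infty$.

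For $\tth(P) \le \mlth(P)$, the plan is to apply Proposition~\ref{prop:UD} to an extremal $P$-free family and then translate the resulting ``upward root'' to $\emptyset$ by passing to a subcube. Concretely, fix $\epsilon > 0$ and take $n$ large with a $P$-free $\F \subseteq \tsupn$ satisfying $|\F|/\nchn \ge \tth(P) - \epsilon$. Proposition~\ref{prop:UD} applied to $\F$, combined with the basic inequality $\hb(\F) \ge |\F|/\nchn$, produces some $A \in \F$ with $\hb_A^+(\F) \ge \hb(\F) \ge \tth(P) - \epsilon$. The map $C \mapsto C - A$ is a poset isomorphism on $\{C \in \F : A \subseteq C\}$ viewed as a subposet of $\F$, so its image $\F' \subseteq 2^{[n]-A}$ is $P$-free; since $A$ maps to $\emptyset$, we have $\emptyset \in \F'$; and a direct calculation identifying the chain weight $1/\binom{n-|A|}{|C|-|A|}$ with the Lubell weight in $\bool{N}$, where $N = n - |A|$, gives $\hb(\F') = \hb_A^+(\F) \ge \tth(P) - \epsilon$.

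The main obstacle is that Proposition~\ref{prop:UD} gives no control over $|A|$, so $N = n - |A|$ could a priori be bounded; in that case the witness $\F'$ lives in a fixed-size cube and fails to feed into the $\limsup_{N\to\infty}$ defining $\mlth(P)$. I will handle this by first replacing $\F$ with $\F^{\mathrm{low}} = \{A \in \F : |A| \le 2n/3\}$. By a Chernoff tail bound $\sum_{k > 2n/3}\binom{n}{k} = o(\nchn)$, so the restriction satisfies $|\F^{\mathrm{low}}|/\nchn \ge \tth(P) - \epsilon - o(1)$, and being a subfamily of $\F$ it remains $P$-free. Running the construction above on $\F^{\mathrm{low}}$ forces $|A| \le 2n/3$, hence $N \ge n/3 \to \infty$, and yields $P$-free families $\F'$ containing $\emptyset$ in $\bool{N}$ of arbitrarily large dimension whose Lubell masses approach $\tth(P)$. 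Therefore $\mlth(P) \ge \tth(P) - \epsilon$, and letting $\epsilon \to 0$ completes the proof.
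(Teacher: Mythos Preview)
Your proof is correct and follows essentially the same approach as the paper: both arguments restrict to sets of bounded size so that the ``root'' $A$ produced by Proposition~\ref{prop:UD} leaves a subcube of dimension tending to infinity, then translate to make $\emptyset$ the minimum of a $P$-free family whose Lubell mass is close to $\tth(P)$. The only cosmetic differences are the choice of cutoff (the paper uses $n/2+\sqrt{2n\ln n}$ via Lemma~\ref{lem:tail}, you use $2n/3$ via a Chernoff bound) and the $\epsilon$ formulation in place of an explicit sequence.
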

\begin{proof}
Since an additional restriction is imposed on $P$-free families in the definition of $\mlth(P)$, it is immediate that $\mlth(P) \le \lth(P)$.

For the other inequality, let $(\F_n)_{n\ge 1}$ be a sequence of $P$-free families such that $\F_n \subseteq \tsupn$ and $\limsup_{n\to\infty} |\F|/\nchn = \tth(P)$.  Let $\F'_n$ be the subfamily of $\F_n$ consisting of all sets $A\in \F_n$ with $|A| \le n/2 + \sqrt{2n\ln n}$.  By Lemma~\ref{lem:tail}, we have that $|\F_n| - |\F'_n| \le 2^n/n = o(\nchn)$.  It follows that $\hb(\F'_n) \ge |\F'_n|/\nchn \ge |\F_n|/\nchn - o(1)$.  By Proposition~\ref{prop:UD}, there exists $A_n \in \F'_n$ such that $\hb(\F', I_n) \ge \hb(\F')$, where $I_n$ is the subcube $[A_n, [n]]$.  Let $\F''_n = \{C-A_n\st C\in \F'_n \cap I_n\}$, and note that $\emptyset \in \F''_n$ and $\F''_n$ is subfamily of an $n''$-dimensional cube with $n'' = n - |A| \ge n - (n/2 + \sqrt{2n\ln n})$.  Therefore $(\F''_n)_{n\ge 1}$ is a sequence of $P$-free families in Boolean lattices of unbounded dimension and $\hb(\F''_n) = \hb(\F'_n, I_n) \ge \hb(\F'_n) \ge |\F_n|/\nchn - o(1)$.  It follows that $\mlth(P) \ge \limsup_{n\to\infty} \hb(\F''_n) \ge \tth(P)$.  
\end{proof}

Using $\mlth(P)$, we improve the bound $\tth(\bool{2}) \le 2 + 2/3$ that follows from Proposition~\ref{prop:b2SL}.  This gives a preliminary upper bound on $\tth(\bool{2})$.  We expect that additional improvements are possible, and we hope this motivates additional work on the induced variant of the diamond-free poset problem.

\newcommand{\cun}{\vee}
\begin{theorem}\label{thm:b2ML}
$2.2818564< \mlth(\bool{2})<2.5823284$.
\end{theorem}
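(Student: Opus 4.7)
The plan is to establish the two bounds separately, each by refining a technique already in the paper.

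For the upper bound $\mlth(\bool{2})<2.5823284$, let $\F\subseteq\tsupn$ be $\bool{2}$-free with $\emptyset\in\F$. Because $\emptyset\in\F$ lies on every maximal chain of $\tsupn$, the parameter $p$ in Proposition~\ref{prop:UD} equals $1$. Mirroring the proof of that proposition in the downward direction, one obtains the identity
\[
\hb(\F)=\sum_{A\in\F}\hb_A^-(\F)\,\Pr\!\bigl[\max(\mathcal{C}\cap\F)=A\bigr]
\]
for a uniformly random maximal chain $\mathcal{C}$. Within the subcube $[\emptyset,A]\cong\bool{|A|}$, the family $\F\cap[\emptyset,A]$ is $\bool{2}$-free and contains both endpoints, so the argument in Proposition~\ref{prop:b2SL} forces it to be a chain, giving $\hb_A^-(\F)\le a_{|A|}$. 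Therefore $\hb(\F)\le\E[a_{|M|}]$ where $M=\max(\mathcal{C}\cap\F)$. Since $a_k$ is maximized at $k\in\{3,4\}$ with $a_3=a_4=8/3$, this alone only reproduces $\slth(\bool{2})=8/3$.

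To push below $2.5823284$, I would exploit that $a_k<2.5823284$ for $k\ge 6$ ($a_6=2.517\ldots$), so the improvement hinges on controlling the mass that $|M|$ places at $k\in\{3,4,5\}$. The tree structure of $\F$ (every interval in $\F$ is a chain, which follows from $\bool{2}$-freeness together with $\emptyset\in\F$) imposes matching-type restrictions between the level densities $x_k=|\F\cap\binom{[n]}{k}|/\binom{n}{k}$: for each $C\in\F$ at level $k+1$, at most one of $C$'s $k$-subsets can belong to $\F$, and more generally downsets in $\F$ must be chains. I would set up a linear program whose variables are the $x_k$ together with a distribution of $|M|$ consistent with the $x_k$, whose constraints are the identity above, the bound $\hb_A^-(\F)\le a_{|A|}$, and the matching inequalities between consecutive levels, and whose objective is $\hb(\F)$. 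The asymptotic optimum of this LP, as $n\to\infty$, should deliver the constant $2.5823284$.

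For the lower bound $\mlth(\bool{2})>2.2818564$, I would exhibit a sequence of $\bool{2}$-free families $\F_n\subseteq\tsupn$ containing $\emptyset$ with $\liminf_{n\to\infty}\hb(\F_n)>2.2818564$. Because each such $\F_n$ must be tree-like with root $\emptyset$, the Lubell mass is maximized by placing branches at levels where $1/\binom{n}{k}$ is largest, i.e.\ near the extremes of $\tsupn$. A natural template is $\{\emptyset\}$ together with a carefully chosen antichain near $\binom{[n]}{\lfloor n/2\rfloor}$ (contributing Lubell mass close to $1$) plus additional branches hanging off selected elements and reaching toward the top, arranged so that no two branches share a common upper bound in $\F_n$. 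Parameterizing the branching and optimizing should saturate the bound $2.2818564$, which I expect to be the optimum of a small LP dual to the one in the upper bound. The hard part is the upper bound's LP: the constraints must be sharp enough to beat $a_5=2.6$, which requires carefully capturing the matching structure that $\bool{2}$-freeness combined with $\emptyset\in\F$ imposes on the small levels of $\F$.
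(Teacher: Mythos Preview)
Your upper-bound framework is exactly the paper's starting point: the identity $\hb(\F)=\sum_{A\in\F}\hb_A^-(\F)\Pr[\max(\mathcal{C}\cap\F)=A]$ together with $\hb_A^-(\F)\le a_{|A|}$. But your proposed refinement --- an LP on the level densities $x_k$ with ``matching inequalities between consecutive levels'' --- is not the mechanism that actually pushes the bound below $a_5=2.6$, and it is not clear your constraints would do so. The paper's key step is different and more concrete: let $S=\{s:\{s\}\in\F\}$ and split $\F$ into $\G=\{A:\F\cap[\emptyset,A]\text{ contains no singleton}\}$ and $\M_k=(\F\setminus\G)\cap\binom{[n]}{k}$. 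For $A\in\G$ one gets the sharper bound $\hb_A^-(\F)\le a_{|A|}-1/|A|\le 29/12$. For $A\in\M_k$ one has $|A\cap S|=1$ (two singletons under $A$ would already give a diamond with $\emptyset$ and $A$), which forces the random chain through $A$ to satisfy $|C_i\cap S|\le 1$ and $|C_j\cap S|\ge 1$ whenever $i\le|A|\le j$. This yields explicit polynomial bounds $\Pr(\EE_{i,j})\le x^i+i(1-x)x^{i-1}-x^j+O(1/n)$ in the single parameter $x=1-|S|/n$, and maximizing the resulting degree-$7$ polynomial over $x\in[0,1]$ gives $2.58232\ldots$. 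Your LP sketch does not isolate this singleton parameter, and without it there is no evident route to the constant.

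Your lower-bound template is off in a more basic way. The paper's construction does not use a middle-level antichain at all; it lives entirely at levels $0,1,2,3$. One partitions $[n]=S\cup T\cup R$ and takes $\{\emptyset\}\cup\binom{S}{1}\cup\bigl(\binom{S}{1}\vee\binom{T}{1}\bigr)\cup\bigl(\binom{S}{1}\vee\binom{T}{1}\vee\binom{R}{1}\bigr)\cup\binom{T}{2}\cup\binom{R}{2}\cup\bigl(\binom{T}{2}\vee\binom{R}{1}\bigr)\cup\bigl(\binom{T}{1}\vee\binom{R}{2}\bigr)$; optimizing the part sizes gives $\hb(\F)\to\frac{428}{243}+\frac{40}{243}\sqrt{10}\approx 2.2818564$. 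Your idea of $\{\emptyset\}$ plus a middle antichain yields Lubell mass at most $2$, and ``branches toward the top'' cannot recover the extra $0.28$: chains from distinct antichain elements must be pairwise disjoint in $\F$ (else a diamond through $\emptyset$), but level $\lfloor n/2\rfloor+1$ already has fewer sets than level $\lfloor n/2\rfloor$, so you cannot extend a full middle antichain upward, and partial extensions contribute $o(1)$ per level. Finally, note that the two constants are genuinely different ($2.28<2.58$), so there is no ``dual LP'' equating them as you suggest.
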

\begin{proof}
The lower bound is due to the following construction.
Partition $[n]$ into sets $S$, $T$, and $R$.  For families $\G$ and $\HH$, let $\G \cun \HH = \{A\cup B\st \mbox{$A\in\G$ and $B\in\HH$}\}$.  Let $\F$ be the set family that is the union of the following subfamilies:
\begin{enumerate}
\item $\{\emptyset\}$,
\item $\binom{S}{1}$,
\item $\binom{S}{1}\cun \binom{T}{1}$,
\item $\binom{S}{1}\cun \binom{T}{1}\cun \binom{R}{1}$,
\item $\binom{T}{2}$,
\item $\binom{R}{2}$, 
\item $\binom{T}{2} \cun \binom{R}{1}$, and
\item $\binom{T}{1} \cun \binom{R}{2}$.
\end{enumerate}
It is clear that $\F$ contains no induced copy of $\bool{2}$.
Set $|S|=x_1n$, $|T|=x_2n$, and $|R|=x_3n$, where $x_1+x_2+x_3=1$.
We have
\begin{align*}
\Lubell(\F) &=1+\frac{|S|}{n}+ \frac{|S||T|+{|T|\choose
    2}+{|R|\choose 2}}{{n\choose
    2}}
+ \frac{|S||T||R|+|T|{|R|\choose 2}| +{|T|\choose 2}|R|}{{n\choose
    3}}\\
  &= 1+x_{{1}}+2\,x_{{1}}x_{{2}}+6\,x_{{1}}x_{{2}}x_{{3}}+{x_{{2}}}^{2}+3\,
{x_{{2}}}^{2}x_{{3}}+3\,x_{{2}}{x_{{3}}}^{2}+{x_{{3}}}^{2}+O(1/n).
\end{align*}
This polynomial achieves the maximum value
${\frac {428}{243}}+{\frac {40}{243}}\,\sqrt {10}\approx 2.281856404$
at 
$x_1=x_3=\frac{\sqrt{10}-1}{9}\approx .2402530734$ and
$x_2=\frac{11+2\sqrt{10}}{9}\approx  .5194938532$.
Hence, $ \mlth(\bool{2})>2.2818564$.

The upper bound follows from the first moment method.  Let $\F$ be a $\bool{2}$-free subset of $\tsupn$ with $\emptyset\in \F$.  Note that for each $A\in\F$, we have that $\F \cap [\emptyset, A]$ is a chain, and therefore $\F \cap [\emptyset, A]$ contains at most one set of each size in $\{0, \ldots, |A|\}$.  Let $\G$ be the subfamily of all $A
\in \F$ such that $\F \cap [\emptyset, A]$ does not contain a set of size $1$.  For $k\ge 1$, let $\M_k = (\F - \G) \cap \binom{[n]}{k}$.  Note that $\G$ and $(\M_k)_{k\ge 1}$ form a partition of $\F$.

Choose a maximal chain $\C$ in $\tsupn$ uniformly at random.  For each $A\in \F$, let $\C_A$ be the event that $A$ is the maximum set in $\F \cap \C$.   Then we have
\begin{align}
\Lubell(\F)&=\sum_{A\in \F} \Lubell_A^-(\F)\Pr(\C_A)\\
&= \sum_{A\in \G} \Lubell_A^-(\F)\Pr(\C_A) + \sum_{k \ge 1} \left( \sum_{A\in \M_k} \Lubell_A^-(\F)\Pr(\C_A) \right)
\end{align}
\newcommand{\EE}{\mathcal{E}}%
Since $\F \cap [\emptyset, A]$ contains at most one set of each size for each $A\in \F$, it follows that $\Lubell_A^-(\F) \le a_k$, where $k=|A|$.  Moreover, if $A\in\G$, then $\Lubell_A^-(\F) \le a_k - 1/k$.  It is easy to check that the sequence $(a_k-1/k)_{k\ge 1}$ achieves its maximum value $2+5/12$ at $k=4$.  Let $\alpha = 2+5/12$.  Setting $\EE_0$ to be the event that the maximum set in $\F \cap \C$ belongs to $\G$ and $\EE_k$ to be the event that the maximum set in $\F \cap \C$ belongs to $\M_k$, it follows that 
\begin{align}
\Lubell(\F)&\le \sum_{A\in \G} \alpha\Pr(\C_A) + \sum_{k \ge 1} \left( \sum_{A\in \M_k} a_k\Pr(\C_A) \right)\\
\label{eq:tosort}
&= \alpha\Pr(\EE_0) + \sum_{k\ge 1} a_k \Pr(\EE_k).
\end{align}
Note that the sequence $(a_k)_{k\ge 0}$ satisfies $a_3 = a_4 > a_5 > a_6 > a_2 > a_7 > \alpha$ and $a_k < \alpha$ when $k\le 1$ or $k\ge 8$.  With $\EE_{i,j} = \bigcup_{i\le k\le j} \EE_k$, we rewrite Equation~\ref{eq:tosort}:
\begin{align*}
\Lubell(\F) \le ~&(a_3-a_5) \Pr(\EE_{3,4}) + 
(a_5 - a_6) \Pr(\EE_{3,5}) + 
(a_6 - a_2) \Pr(\EE_{3,6}) + ~\\
& (a_2 - a_7) \Pr(\EE_{2,6}) + (a_7 - \alpha) \Pr(\EE_{2,7}) + \alpha.
\end{align*}
Let $S = \{s\st \{s\}\in \F\}$, let $T = [n] - S$, and let $x=|T|/n$.  Recall that $\C$ is a random maximal chain in $\tsupn$.  If the event $\EE_{i,j}$ occurs, then the maximum set $A$ in $\F \cap \C$ belongs to $\bigcup_{i\le k\le j} \M_k$.  For each $k$, let $C_k$ be the unique set in $\C$ of size $k$.  Since $i \le |A| \le j$, it follows that $C_i \subseteq A \subseteq C_j$.  Since $|A \cap S| = 1$, it is not possible that $|C_i \cap S| \ge 2$.  Nor is it possible that $|C_j \cap S| = 0$.  As these are disjoint events, we have that 
\begin{align*}
\Pr(\EE_{i,j}) &\le 1 - \Pr(|C_i \cap S| \ge 2) - \Pr(|C_j \cap S| = 0)\\
&\le 1 - \left(1 - \Pr(|C_i \cap S| = 0) - \Pr(|C_i \cap S| = 1)\right) - \Pr(|C_j \cap S| = 0)\\
&\le \Pr(|C_i \cap S| = 0) + \Pr(|C_i \cap S| = 1) - \Pr(|C_j \cap S| = 0).
\end{align*}
When $i$ and $j$ are fixed and $n$ is large, this becomes
\begin{align*}
\Pr(\EE_{i,j}) \le x^i + i(1-x)x^{i-1} - x^j + O(1/n).
\end{align*}
Substituting into our bound and simplifying shows that
\begin{align*}
\Lubell(\F) \le \frac{29}{12} + \frac{1}{6}x + \frac{5}{12}x^2 - 
\frac{1}{3} x^3 - \frac{1}{15}x^4 - \frac{1}{12}x^5 - 
\frac{11}{140}x^6 - \frac{3}{140}x^7 + O(1/n).
\end{align*}
Using calculus, it is easy to check that the maximum value of the above
polynomial for $x\in[0,1]$ is $2.5823283024\cdots$, which is reached at
$x=0.6870021578\cdots$.  Thus, $\mlth(\bool{2})\leq 2.5823284$.
\end{proof}

For the extension variant, Kramer, Martin, and Young~\cite{KMY} proved that $\La(n,\bool{2})\leq (2.25+o(1))\nchn$.  In fact, they prove that if $\emptyset \in \F \subseteq \tsupn$ and $\F$ does not contain an extension of $\bool{2}$, then $\hb(\F) \le 2.25+o(1)$.  Since there are $\bool{2}$-free families with Lubell mass $2.25 - o(1)$, it is not possible to improve the upper bound $\La(n,\bool{2})\le (2.25+o(1))\nchn$ using Lubell-type arguments exclusively.  

The lower bound in Theorem~\ref{thm:b2ML} shows that the Lubell threshold of $\bool{2}$ is strictly larger in the induced variant than it is in the extension variant, providing additional evidence that the two problems are fundamentally different.  Just as known constructions show that $2.25$ is a natural barrier in the extension variant, the construction in Theorem~\ref{thm:b2ML} shows that one cannot improve the bound $\tth(\bool{2}) < 2.583$ beyond $2.28$ using Lubell-type arguments exclusively.

\bibliographystyle{abbrv}
\bibliography{forbidden-ind}

\end{document}